\documentclass[12pt]{amsart}
\usepackage{graphicx}
\usepackage{amssymb}
\usepackage{hyperref}
\usepackage{verbatim}
\usepackage[english]{babel}    
% Palatino for rm and math | Helvetica for ss | Courier for tt
\usepackage{mathpazo} % math & rm
\linespread{1.05}        % Palatino needs more leading (space between lines)
\usepackage[scaled]{helvet} % ss
\usepackage{courier} % tt
\normalfont
\usepackage[T1]{fontenc}
\setlength{\parindent}{10pt}

\newtheorem{theorem}{Theorem}[section]
\newtheorem{corollary}[theorem]{Corollary}
\newtheorem{lemma}[theorem]{Lemma}
\newtheorem{prop}[theorem]{Proposition}
\theoremstyle{remark}
\newtheorem{rem}[theorem]{\bf Remark}
\theoremstyle{definition}
\newtheorem{definition}[theorem]{Definition}
\theoremstyle{definition}

\theoremstyle{remark}
\newtheorem{example}[theorem]{\bf Example}
\theoremstyle{remark}

\newcommand{\dbar}{d\llap{\raise 0.68ex\hbox{-}}}
\newcommand{\im}{{\rm im}}

\newcommand{\trg}{{\rm tr}_G}
\newcommand{\Trg}{{\rm Tr}_G}

\newcommand{\dom}{{\rm dom}}

\newcommand{\bid}{\, {\bf 1}}
\newcommand{\Hmm}[1]{\leavevmode{\marginpar{\tiny%
$\hbox to 0mm{\hspace*{-0.5mm}$\leftarrow$\hss}%
\vcenter{\vrule depth 0.1mm height 0.1mm width \the\marginparwidth}%
\hbox to 0mm{\hss$\rightarrow$\hspace*{-0.5mm}}$\\\relax\raggedright #1}}}
%%%%%%%%%%%%%%%%%%%%%%%%%

\begin{document}
\title[Pseudodifferential operators]{Generalized Fredholm properties for invariant pseudodifferential operators}
\author{Joe J Perez\ \ }
\address{Fakult\"at f\"ur Mathematik\\
Universit\"at Wien\\
Vienna, Austria}
\email{joe\_j\_perez@yahoo.com}
\thanks{JJP is supported by FWF grants P19667 and I382}

\subjclass[2000]{35S05, 58J40}

\begin{abstract} We define classes of pseudodifferential operators on $G$-bundles with compact base and give a generalized $L^2$ Fredholm theory for invariant operators in these classes in terms of von Neumann's $G$-dimension. We combine this formalism with a generalized Paley-Wiener theorem, valid for bundles with unimodular structure groups, to provide solvability criteria for invariant operators. This formalism also gives a basis for a $G$-index for these operators. We also define and describe a transversal dimension and its corresponding Fredholm theory in terms of anisotropic Sobolev estimates, valid also for similar bundles with nonunimodular structure group.\end{abstract}

\maketitle
{\tiny\tableofcontents}

%%%%%%%%%%%%%%%%%%%%
\section{Introduction}
%%%%%%%%%%%%%%%%%%%%
%
We will discuss an $L^2$ theory of some classes of pseudodifferential operators on manifolds $M$ as follow. Our $M$ will always be total spaces of $G$-bundles
\[G\longrightarrow M\stackrel \pi \longrightarrow X,\]
\noindent
with $G$ connected, usually unimodular Lie groups and $X$ compact manifolds. 

In this paper we describe some natural classes of pseudodifferential operators on $M$ and analyze the solvability of $G$-invariant operators in those classes. Our method will be a generalized Fredholm property due to M.\ Breuer \cite{B} as applied in \cite{A} by M.\ Atiyah and by A.\ Connes and H.\ Moscovici in \cite{CM}. This Fredholm property is based on a generalized idea of the dimension of a vector space due to J.\ von Neumann. This dimension, $\dim_G$, is defined for closed, $G$-invariant subspaces of Hilbert spaces on which a unimodular group $G$ acts.

In order to use this, we will construct natural Hilbert spaces $L^2(M)$ and Sobolev spaces $H^s(M)$ of (sections of bundles over) $M$ on which the $G$-action is strongly continuous and unitary. This allows us to define a trace $\Trg$ in the algebra $\mathcal B(L^2(M))^G$ of bounded operators in $L^2(M)$ commuting with the action of $G$. Applying this trace to orthogonal projections $P_L$ onto $G$-invariant subspaces $L\subset L^2(M)$ provides a dimension function $\dim_G$ given by 
\[\dim_G(L) = \Trg(P_L).\]
Roughly speaking, the generalized Fredholm property and index are then defined as usual, but in terms of this dimension. 

In this paper, many results will follow from the following technical fact relating the Sobolev degree to the trace class.
\begin{prop}\label{bigprop} Let $n=\dim M$. If $s>n/2$ and $A\in\mathcal B(L^2(M))^G$ has $\im(A)\subset H^s(M)$, then $\Trg(A^*A)<\infty$.\end{prop}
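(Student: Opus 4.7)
The plan is to obtain a uniform pointwise bound on the diagonal of the Schwartz kernel of $AA^*$, so that integrating it against $X$ yields a finite $\Trg(AA^*)$; the conclusion follows via the trace identity $\Trg(A^*A)=\Trg(AA^*)$.

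By the closed graph theorem, the hypothesis $\im(A)\subset H^s(M)$ promotes $A$ to a bounded operator $L^2(M)\to H^s(M)$ of some norm $M_A$. Since $X$ is compact and $M$ carries a $G$-invariant Riemannian metric, $M$ has bounded geometry, and for $s>n/2$ the Sobolev embedding $H^s(M)\hookrightarrow C_b(M)$ holds uniformly: there is $C>0$ with $|u(m)|\leq C\|u\|_{H^s}$ for every $m\in M$ and $u\in H^s(M)$. Composing, for each $m\in M$ the functional $f\mapsto(Af)(m)$ has $L^2$-norm at most $CM_A$, so by Riesz there is $\psi_m\in L^2(M)$ with $(Af)(m)=\langle f,\psi_m\rangle$ and $\|\psi_m\|_{L^2}\leq CM_A$ uniformly in $m$. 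Equivalently, $A$ admits an integral kernel $K(m,m')$ with $\int_M|K(m,m')|^2\,dm'\leq C^2M_A^2$ uniformly in $m$.

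The kernel of $AA^*$ is $K_{AA^*}(m,q)=\int K(m,p)\overline{K(q,p)}\,dp$, so on the diagonal
\[K_{AA^*}(m,m)=\int_M|K(m,p)|^2\,dp\leq C^2M_A^2\]
uniformly in $m$. Because $A$, and therefore $AA^*$, commutes with the unitary $G$-action, the diagonal function $m\mapsto K_{AA^*}(m,m)$ is $G$-invariant and descends to a bounded measurable function on $X$. The trace formula---realizing $\Trg$ of a $G$-invariant positive operator with sufficiently regular kernel as the integral of its diagonal against a measurable section $\sigma:X\to M$---then gives
\[\Trg(A^*A)=\Trg(AA^*)=\int_X K_{AA^*}(\sigma(x),\sigma(x))\,dx\leq C^2M_A^2\,\mathrm{vol}(X)<\infty.\]

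The main technical point will be justifying the trace formula used at the end. It should rest on a structural description of $\mathcal{B}(L^2(M))^G$: a measurable section $\sigma:X\to M$ induces a unitary $L^2(M)\cong L^2(G)\otimes L^2(X)$ under which $\mathcal{B}(L^2(M))^G\cong\mathcal{R}(G)\,\bar\otimes\,\mathcal{B}(L^2(X))$ and $\Trg=\tau_G\otimes\mathrm{tr}$; the Plancherel theorem for the unimodular group $G$ then converts $\tau_G$ into integration of the diagonal along the group direction, giving the stated formula. A subsidiary regularity check---that the kernel of $AA^*$ is well-defined pointwise on the diagonal---follows from the same Sobolev argument, since $AA^*=A\circ A^*$ also maps $L^2(M)$ into $H^s(M)$.
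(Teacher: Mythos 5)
Your analytic half coincides with the paper's own argument (its Prop.\ \ref{point}): the closed graph theorem makes $A:L^2(M)\to H^s(M)$ bounded, the Sobolev lemma (uniform over $M$ by invariance of the metric) plus Riesz representation gives $K_A(p,\cdot)=k_p\in L^2(M)$ with $\sup_p\|k_p\|_{L^2}\le C\|A\|_{L^2\to H^s}$, and invariance of $A$ plus compactness of $X$ gives $\int_X\|k_{\sigma(x)}\|_{L^2}^2\,d\mu(x)<\infty$. Note that your diagonal value is exactly this integrand, $K_{AA^*}(p,p)=\int_M|K_A(p,q)|^2\,dq=\|k_p\|_{L^2}^2$, so the two computations produce the same number. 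The genuine gap is the step you defer to the end: the identification $\Trg(AA^*)=\int_X K_{AA^*}(\sigma(x),\sigma(x))\,dx$ for a positive invariant operator with ``sufficiently regular kernel.'' That is not a routine bookkeeping step; it is the entire von Neumann-algebraic content of the proposition, and as sketched it risks circularity. The natural justification of a diagonal formula in the group direction is Pedersen's $\trg(\lambda_\kappa)=\kappa(e)$ (Prop.\ \ref{ped}), which is available precisely for operators known to be products of two $G$-Hilbert--Schmidt convolutions --- i.e.\ once one already knows $A\in\dom_{1/2}(\Trg)$, which is what you are trying to prove; a Mercer-type argument for positive operators would in turn need continuity of the kernel of $AA^*$ in the group variable, which again is most easily extracted from the very $L^2$ statement at issue.

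The paper closes this gap with a weaker but sufficient statement, and you can too: instead of a diagonal trace formula for $AA^*$, prove the $G$-Hilbert--Schmidt identity for $A$ itself. Decompose $L^2(M)\cong\bigoplus_l L^2(G)\otimes\psi_l$ with $(\psi_l)_l$ an orthonormal basis of $L^2(X)$, write the invariant $A$ as a matrix of convolutions $[\lambda_{\kappa_{kl}}]_{kl}$, and use only $\trg(\lambda_\kappa^*\lambda_\kappa)=\|\kappa\|_{L^2(G)}^2$ together with normality of $\trg$ to obtain
\[\Trg(A^*A)=\sum_{k,l}\|\kappa_{kl}\|_{L^2(G)}^2=\|K_A\|_{L^2(\frac{M\times M}{G})}^2=\int_X\|k_{\sigma(x)}\|_{L^2}^2\,d\mu(x).\]
This uses no pointwise regularity of $K_{AA^*}$ and no trace-class information about $A$ in advance; your uniform bound $\|k_p\|_{L^2}\le C\|A\|_{L^2\to H^s}$ then finishes the proof exactly as in the paper. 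With that substitution your argument is correct; without it, the concluding formula remains unproved.
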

Defining $UL_G^m(M)$ to be the class of $G$-invariant pseudodifferential operators uniformly in the H\"ormander class $L^m$ on $M$, the above proposition, together with properties of these operator classes, will give that if $A\in UL_G^m(M)$ with $m<-n/2$, then $\Trg(A^*A)<\infty$. These results reduce to well-known optimal conditions put forth to obtain membership in the Hilbert-Schmidt class when $M$ is compact, \cite[\S8]{Sh}, and in the $\Gamma$-Hilbert-Schmidt operators when $M$ has a cocompact discrete group action, \cite[Thm.\ 3.4]{Schi}. We will also obtain that for $m< -n$, $A\in UL_G^m(M)$ is a $G$-trace-class operator, generalizing the results for the compact and cocompact discrete $\Gamma$ cases.
An easy consequence will be
\begin{corollary}\label{easy} Let $G$ be a connected unimodular Lie group and suppose that $M$ is the total space of a $G$-bundle with compact base. It follows that if $A\in UL_G^m(M)$ is elliptic, then the $G$-index 
\[{\rm ind}_G(A) = \dim_G\ker(A) - \dim_G\ker(A^*) \]
of $A$ is defined.\end{corollary}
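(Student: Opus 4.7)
The strategy is to reduce Corollary~\ref{easy} to Proposition~\ref{bigprop}, applied to the orthogonal projections onto $\ker(A)$ and $\ker(A^*)$. The essential input is elliptic regularity, obtained via a parametrix in the uniform $G$-invariant calculus.

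First, since $A\in UL_G^m(M)$ is elliptic, I would construct a parametrix $Q\in UL_G^{-m}(M)$ satisfying $QA=I-R_1$ and $AQ=I-R_2$ with $R_1,R_2\in UL_G^{-\infty}(M)$, uniformly smoothing. Then any $u\in\ker(A)$ satisfies $u=R_1u$; since $R_1$ maps $L^2(M)$ continuously into $H^s(M)$ for every $s$, we obtain $\ker(A)\subset H^s(M)$ for all $s$. In particular $\ker(A)$ is closed in $L^2(M)$, and it is $G$-invariant because $A$ commutes with the unitary $G$-action. Let $P$ be the orthogonal projection onto $\ker(A)$. Unitarity of the $G$-representation and $G$-invariance of $\ker(A)$ give $P\in\mathcal B(L^2(M))^G$, and $\im(P)=\ker(A)\subset H^s(M)$ for any $s>n/2$. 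Proposition~\ref{bigprop} then yields $\Trg(P^*P)<\infty$; since $P$ is a self-adjoint projection, $P^*P=P$, so $\dim_G\ker(A)=\Trg(P)<\infty$.

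The operator $A^*$ lies in $UL_G^m(M)$ and is also elliptic, so the identical argument gives $\dim_G\ker(A^*)<\infty$. Therefore both terms in ${\rm ind}_G(A)=\dim_G\ker(A)-\dim_G\ker(A^*)$ are finite and the $G$-index is well-defined.

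The main technical obstacle is ensuring that the parametrix construction works inside the uniform $G$-invariant calculus $UL_G^{\bullet}(M)$ on the noncompact manifold $M$, with the remainders truly smoothing in the uniform sense so that they map $L^2(M)$ into every $H^s(M)$. This should come from the symbolic calculus of $UL_G^m(M)$ developed earlier in the paper, where the $G$-bundle structure over the compact base $X$ permits local symbolic constructions to be assembled with uniform control; once that calculus is in place, the remainder of the argument is formal.
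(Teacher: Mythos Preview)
Your proposal is correct and matches the paper's argument essentially line for line: the paper's Proposition~\ref{corellfin} uses the invariant parametrix (Proposition~\ref{goodpram}, obtained via Connes--Moscovici averaging) to get $\ker(A)\subset H^\infty(M)$, then applies Proposition~\ref{bigprop} to the self-adjoint projection onto $\ker(A)$, and finally observes that the adjoint is again elliptic in $UL_G^m(M)$ so the same reasoning handles $\ker(A^*)$. The technical point you flag---that the parametrix must live in the invariant uniform calculus with uniformly smoothing remainders---is exactly what Proposition~\ref{goodpram} supplies.
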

We will also set up the $G$-Fredholm theory for elliptic operators $EUL_G^m(M)$ in $UL_G^m(M)$ and describe solvability in $L^2$ with Sobolev estimates. Since $M$ possesses a global $G$-action, it makes sense to convolve functions $f$ on $M$ by kernels $\kappa$ on $G$. We denote this by $f\mapsto \rho_\kappa f$. The solvability statement is
\begin{theorem} If $m\ge 1$, $A\in EUL_G^m(M)$ is self-adjoint, and $f\in C^\infty_c(M)$, then we may solve $Au=g$ in $L^2$, with the uniform Sobolev estimates $\|u\|_{s+m}\lesssim\|g\|_s$, for all $g=\rho_\kappa f$ in a space of infinite $G$-dimension in $L^2(M)$. Furthermore, all such $g$ correspond to convolution kernels $\kappa$ belonging to $C^\infty\cap L^2(G)$. Put differently, $Au=f$ is not only solvable in $L^2$ modulo errors in $H^\infty$; but also the equation can be solved exactly in an infinite-$G$-dimensional space consisting of smooth convolutions of $f$ itself. Furthermore, the kernels of $A$ and of any invariant parametrix of $A$ contain no elements of $L^2$ with compact support.\end{theorem}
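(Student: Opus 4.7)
My plan is to combine the $G$-Fredholm theory applied to the self-adjoint elliptic operator $A$ with the generalized Paley-Wiener theorem invoked in the introduction. First I would invoke Corollary~\ref{easy}, which gives $\dim_G\ker(A)<\infty$; together with self-adjointness and the $G$-Fredholm property this forces $\im(A)=\ker(A)^\perp$ to be a closed $G$-invariant subspace of $L^2(M)$, and elliptic regularity for operators in $UL_G^m$ produces the uniform Sobolev estimate $\|u\|_{s+m}\lesssim\|g\|_s$ for the unique solution $u\in\ker(A)^\perp$ of $Au=g$.

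To construct the infinite-$G$-dimensional space of right-hand sides $g=\rho_\kappa f$, let $P$ denote orthogonal projection onto $\ker(A)$. Because $A$ is $G$-invariant, each $\rho_\kappa$ commutes with $A$ (by integrating the identity $L_h A = A L_h$ against $\kappa$) and hence with the spectral projection $P$, so $\rho_\kappa f\in\ker(A)^\perp$ if and only if $\rho_\kappa Pf=0$. Consider the $G$-equivariant operators $S_f,S_{Pf}:L^2(G)\to L^2(M)$ defined by $S_h(\kappa)=\rho_\kappa h$. For any such $G$-equivariant operator the polar decomposition produces a $G$-equivariant partial isometry identifying $(\ker S_h)^\perp\cong\overline{\im(S_h)}$ as Hilbert $G$-modules, so additivity of $\dim_G$ yields
\[\dim_G\ker(S_h)=\dim_G L^2(G)-\dim_G\overline{\im(S_h)}.\]
Applied to $h=Pf$ the right-hand side is infinite: $\overline{\im(S_{Pf})}\subseteq\ker(A)$ has finite $G$-dimension, while $L^2(G)$ has infinite $G$-dimension (from Plancherel for the connected unimodular Lie group $G$). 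The generalized Paley-Wiener theorem applied to the compactly supported $f\neq 0$ then gives $\dim_G\ker(S_f)<\infty$, whence
\[W:=S_f(\ker S_{Pf})\subseteq\ker(A)^\perp\]
satisfies $\dim_G W\geq\dim_G\ker(S_{Pf})-\dim_G\ker(S_f)=\infty$. Every element of $W$ is of the form $\rho_\kappa f$ with $\rho_\kappa Pf=0$, so solvability of $Au=g$ with the uniform estimates is immediate from the first paragraph; a density argument within $\ker(S_{Pf})$ restricts the convolution kernels to $C^\infty\cap L^2(G)$, as required by the second assertion.

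For the final claim, suppose for contradiction that $u\in\ker(A)\cap L^2(M)$ is compactly supported and nonzero. The map $T_u(\kappa)=\rho_\kappa u$ is $G$-equivariant with $\im(T_u)\subseteq\ker(A)$, since $\rho_\kappa$ commutes with $A$. Applying the generalized Paley-Wiener theorem to the compactly supported $u$ yields $\dim_G\ker(T_u)<\infty$, hence $\dim_G\overline{\im(T_u)}=\dim_G L^2(G)-\dim_G\ker(T_u)=\infty$, contradicting $\dim_G\ker(A)<\infty$. The same reasoning handles any invariant parametrix $B$ of $A$: because the principal symbol of $B$ is the pointwise inverse of that of $A$ one has $B\in EUL_G^{-m}(M)$, so Corollary~\ref{easy} applied to $B$ gives $\dim_G\ker(B)<\infty$, and a compactly supported $L^2$ element of $\ker(B)$ would generate an infinite-$G$-dimensional subspace of $\ker(B)$.

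The main obstacle is the appeal to the generalized Paley-Wiener theorem for $G$-invariant convolutions on $M$---concretely, the finite-$G$-dimensionality of the $L^2(G)$-kernel of $\kappa\mapsto\rho_\kappa h$ whenever $h\in L^2(M)$ is compactly supported and nonzero. Given this, the remainder is a straightforward assembly of the $G$-Fredholm formalism, elliptic regularity, and the additivity of $\dim_G$ on orthogonal $G$-invariant decompositions within the commutant $\mathcal B(L^2(M))^G$.
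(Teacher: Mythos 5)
The central step of your first paragraph is not correct: self-adjointness together with the $G$-Fredholm property does \emph{not} force $\im(A)=\ker(A)^\perp$ to be closed, nor does it give a uniform estimate on all of $\ker(A)^\perp$. Definition \ref{gfred} only supplies \emph{some} closed invariant $Q\subset\im(A)$ whose orthocomplement has finite $G$-dimension; the range itself is typically non-closed because the spectrum of $A$ may accumulate at $0$ (or contain $0$ in its continuous part). A concrete counterexample to your claim: $G=M=\mathbb R$, $A=-d^2/dx^2\in EUL_G^2(M)$ is self-adjoint, elliptic and $G$-Fredholm (the spectral subspace for $[0,\delta)$ has finite $G$-dimension), $\ker(A)=\{0\}$, yet $\im(A)$ is dense and not closed, and no bound $\|u\|_{s+2}\lesssim\|Au\|_s$ holds uniformly on $L^2$. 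This is precisely why the paper's proof works with the spectral cut $P_\delta=\int_{-\delta}^{\delta}dE_\lambda$ for $\delta>0$ (Prop.\ \ref{ellfred}): on $Q=\im(P_\delta)^\perp$ the inverse is bounded (by $1/\delta$), the elliptic estimate of Prop.\ \ref{sobodiff} then yields the Sobolev gain, and the solvable right-hand sides are produced in $Q\cap\langle f\rangle_\delta\neq\{0\}$ via Lemma \ref{easy2} --- not in $\ker(A)^\perp$.

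The second gap is the one you flag yourself, and it is fatal as stated: the finiteness $\dim_G\ker(S_f)<\infty$ for $S_f:\kappa\mapsto\rho_\kappa f$ is not the Arnal--Ludwig theorem and does not follow from it. Cor.\ \ref{AL2} and Cor.\ \ref{bigPW} control the \emph{image} side --- a closed invariant subspace containing a nonzero compactly supported function has infinite $G$-dimension --- and say nothing about the kernel of convolution by $f$; moreover $\dim_G L^2(G)=\infty$, so your subtraction $\dim_G\ker(S_{Pf})-\dim_G\ker(S_f)$ threatens to be $\infty-\infty$. The paper's Prop.\ \ref{exhaust} avoids this entirely: writing $\rho_f=U|\rho_f|$ and taking spectral projections $P_\delta$ of $|\rho_f|$, one sets $\langle f\rangle_\delta=\{\rho_\kappa f\mid\kappa\in\im(P_\delta)\}$; the only Paley--Wiener input is $\dim_G\overline{\im(\rho_f)}=\infty$ (it contains $f$), which gives $\dim_G\langle f\rangle_\delta\to\infty$, and smoothness of the kernels is structural, since $\im(P_\delta)\subset\im(\rho_{\tilde f}\rho_f)\subset C^\infty(G)$ --- your ``density argument'' would at best produce smooth kernels for a dense subset of right-hand sides, not for every $g$ as the theorem asserts. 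Finally, for the last claim the direct argument (the paper's) suffices and is the one you should use throughout: a compactly supported $0\neq u$ in $\ker(A)$, or in $L=\{h\mid R_2h=h\}\supset\ker(B)$ with $L\subset H^\infty(M)$ hence $\dim_G L<\infty$, would generate $\langle u\rangle$ of infinite $G$-dimension inside a space of finite $G$-dimension, a contradiction; your version for $\ker(A)$ instead routes again through the unproven $\dim_G\ker(T_u)<\infty$.
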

\begin{rem} All of our results extend trivially to their analogues in Hermitian $G$-vector-bundles over $M$.\end{rem}
The proof of the theorem depends on a generalized Paley--Wiener theorem, valid for $G$-bundles, which combines finely with the $G$-Fredholm property. This method was developed in \cite{P2} based on a fundamental theorem of D.\ Arnal and J.\ Ludwig, \cite{AL}.

Let us now discuss the pseudodifferential calculi we will be using. L.\ H\"ormander in \cite{H1} defined the classes of symbols $S^m$ on manifolds and R.\ Strichartz began in \cite{S} the study of invariant pseudodifferential operators on Lie groups. In \cite{MS1, MS2}, G.\ Meladze and M.\ Shubin set up pseudodifferential calculi of uniform (but not necessarily invariant) operators on unimodular Lie groups. These were based on the classes $S^m$ of H\"ormander, particularly exploiting the existence of available metric- and measure-theoretic invariances on such groups. In \cite{MS3} other classes on unimodular groups are discussed which allow for the construction of complex powers and provide estimates on the Green function; the entire theory on groups is reviewed succinctly in \cite{MS1}. In \cite{Ko1, Ko2}, Yu.\ Kordyukov took this work of Meladze and Shubin further, generalizing to richer classes of operators on manifolds which do not possess an exact invariance but are of bounded geometry. 
 
Here we will take the properly supported operators of Kordyukov in \cite{Ko1} and apply the treatment there to the situation of $M$ as above. We will then consider subclasses of invariant, properly supported operators on $M$. The passage to invariant operators will use a small modification of an averaging method from \cite{CM}, which maps some general operators to invariant ones. It happens that these operators have good extensions to $L^2$ and we will derive sufficient conditions for membership of these extended operators in generalized trace and Fredholm classes as in \cite{P1}. Our results here should easily extend to the setting of \cite{Schi}.

Work related to ours is in a series of papers of  V.\ Nistor, E.\ Troitsky, A.\ Weinstein, and Ping Xu; \cite{N1, N2, NT, NWX}. Here, the authors constructed and applied an index theory on \emph{families} of Lie groups. Families are more general than our bundles as the fiber is allowed to vary along the base $X$, however, in their work, different assumptions are placed on the type of fiber. For example, in \cite{N1}, all the fibers are assumed to be simply connected and solvable and in \cite{NT}, the fibers are assumed to be compact Lie groups. Their technique does not require that the groups be unimodular, and in \cite{NWX} they even drop the requirement that the fiber be a group.

As in our case, the family encodes the symmetries of an elliptic operator on a bundle with the same base.  The aim in \cite{N1} is a formula for the Chern character of the gauge-equivariant index, similar to the Atiyah--Singer index formula for families, however it also includes information on the topology of the family of Lie groups that is considered. In our work as well as in theirs, of course, one obtains existence theorems for invariant pseudodifferential operator equations, though by substantially different means. 

Actions which are not free lead to other complications, as studied by P.\ Albin, R.\ Mazzeo, R.\ Melrose, and others; see \cite{AM}.

The contents of this paper are as follows. In Sect.\ \ref{msc} the uniform classes of pseudodifferential operators on $M$ will be defined, giving their principal properties. As we have said, this is closely related to \cite{Ko1}. Sect.\ \ref{HMT} is a description of the relevant Hilbert spaces over $M$ which allow us to bring techniques of von Neumann algebras to bear on invariant problems. In Sect.\ \ref{gtraceandmsc}, we relate membership in the $G$-Hilbert-Schmidt and $G$-trace classes to the uniform classes. Sect.\ \ref{PW} contains the existence theory of $G$-Fredholm operators in terms of the generalized Paley--Wiener theorem and the proof of the main theorem.
%
%%%%%%%%%%%%%%%%%%%%
\section{Uniform classes of pseudodifferential operators}\label{msc}
%%%%%%%%%%%%%%%%%%%%
%
\subsection{Local estimates for operators} Though our functional-analytic and representation-theoretic techniques apply only to invariant operators on $G$-manifolds, we will in this section describe a more general calculus of proper, uniform pseudodifferential operators on $M$. 

Our calculus is built locally on the usual H\"ormander classes $S^m$ of symbols uniform in the space variable, and the corresponding classes $L^m$ of $\Psi DO$s. That is, 
\begin{definition} Let $U$ be an open set in $\mathbb R^n$ and $m\in\mathbb R$. A function $a\in C^\infty(U\times\mathbb R^n)$ is said to belong to $S^m(U)$ if it has the property that for any compact $K\subset U$ and multiindices $\alpha,\beta$, there exists a constant $C_{K\alpha\beta}$ such that 
\begin{equation}\label{Sm}|D_x^\beta D_\xi^\alpha a(x,\xi)| \le C_{K\alpha\beta} (1+|\xi|)^{m-|\alpha|}, \qquad (x\in U,\ \xi\in\mathbb R^n).\end{equation}
\end{definition}
As usual, a symbol $a\in S^m$ gives rise to an operator $A\in L^m$, $A: C^\infty_c(U)\to C^\infty(U)$, via the iterated integral
\[Au(x) = \int \dbar\xi\int dy\, e^{i(x - y)\cdot\xi}\, a(x,\xi)u(y).\]
%
%%%%%%%%%%%%%%%%%%%
\subsection{Invariant structures on $M$} Here we will construct invariant geometric structures on $M$ with which to define our uniform classes of $\Psi DO$s. Our first claim guarantees that the results of \cite{Ko1} hold in our setting.
\begin{lemma}\label{gequiv} There exists a $G$-invariant Riemannian metric $g$ on $M$ and any two such metrics are equivalent.
\end{lemma}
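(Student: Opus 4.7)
The plan for existence is to construct $g$ locally from the principal bundle structure and patch via a partition of unity on the compact base $X$; averaging over $G$ is not available since $G$ is only assumed to be a connected Lie group. Choose a finite open cover $\{U_\alpha\}$ of $X$ admitting $G$-equivariant trivializations $\Phi_\alpha\colon \pi^{-1}(U_\alpha)\to U_\alpha\times G$, with $G$ acting on the second factor by translation. Fix any inner product on $\mathfrak{g}=T_eG$ and translate it to a $G$-invariant Riemannian metric $h$ on $G$ (such an $h$ exists on every Lie group; no unimodularity is needed). With an arbitrary Riemannian metric $g_\alpha^X$ on $U_\alpha$, set $g_\alpha:=\Phi_\alpha^*(g_\alpha^X\oplus h)$, a $G$-invariant metric on $\pi^{-1}(U_\alpha)$. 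If $\{\chi_\alpha\}$ is a partition of unity on $X$ subordinate to $\{U_\alpha\}$, then the pullbacks $\widetilde\chi_\alpha:=\chi_\alpha\circ\pi$ are $G$-invariant, supported in $\pi^{-1}(U_\alpha)$, and $g:=\sum_\alpha \widetilde\chi_\alpha\, g_\alpha$ is a $G$-invariant, positive definite Riemannian metric on $M$ (positive definiteness follows from $\sum\widetilde\chi_\alpha\equiv 1$).

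For equivalence, let $g_1,g_2$ be two $G$-invariant Riemannian metrics. Define the pointwise endomorphism $A$ of $TM$ by $g_2(v,w)=g_1(Av,w)$. Then $A$ is $g_1$-self-adjoint, positive definite, and $G$-invariant, so its eigenvalues $\lambda_1(p)\le\cdots\le\lambda_n(p)$ are positive continuous functions on $M$ that are constant on $G$-orbits. They therefore descend to continuous functions on the compact base $X$, hence are bounded above and below by positive constants $0<C_1\le C_2<\infty$; this yields $C_1\,g_1\le g_2\le C_2\,g_1$ uniformly on $M$, so $g_1$ and $g_2$ are equivalent.

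The only delicate point is resisting the temptation to average: one must arrange $G$-invariance by hand in the local model (via a translation-invariant metric on $G$) and then invoke compactness of $X$ both for patching in the existence step and for the uniform eigenvalue bounds in the equivalence step. Neither step requires any structure on $G$ beyond its being a Lie group, which is consistent with the fact that this lemma is later used even in the nonunimodular context of Section \ref{msc}.
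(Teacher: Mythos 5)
Your proof is correct and takes essentially the same route as the paper: existence by patching product metrics (invariant on the $G$-factor, arbitrary on the base factor) over a finite trivializing cover of the compact base $X$, glued with the pulled-back invariant partition of unity $\chi_\alpha\circ\pi$. Your equivalence argument via the $G$-invariant endomorphism $A$ and its eigenvalue bounds merely spells out in detail the compactness-of-the-quotient observation that the paper states in one line.
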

\begin{proof} Let $(O_k)_1^N$ be an open cover of $X$ such that for every $k$ the $G$-subbundle $G\to \pi^{-1}(O_k)\to O_k$ is trivial. Taking the direct product of a right-invariant metric on $G$ with any metric on $O_k$, we obtain a $G$-invariant metric on $G\times O_k$, hence on $\pi^{-1}(O_k)$. Let $(\phi_k)_1^N$ be a partition of unity on $X$ subordinate to the covering $(O_k)_k$ and lift the $\phi_k$ to obtain an invariant  partition of unity $(\varphi_k)_k$ with $\varphi_k := \phi_k\circ\pi$. Now glue the metrics on the trivial bundles $\pi^{-1}(O_k)$ together with $(\varphi_k)_k$. The equivalence follows from the fact that any $G$-invariant metric is uniquely determined by its restriction to the compact quotient. \end{proof} 
So let us choose an invariant Riemannian structure $g$ on $M$. Denote by $|v|_g$ the length of a tangent or form according to $g$ and by ${\rm dist}_g(p,q)$ the geodesic distance between points $p, q\in M$. The metric we obtain by this method yields a complete Riemannian manifold with bounded geometry; see \cite{R, Ko1}. In particular, the injectivity radius $r_{\rm inj}$ of $M$ is positive.

With respect to $g$, choose a global, $G$-invariant orthonormal frame field for $TM$ and with respect to this, in a ball of radius $r_{\rm inj}$ at each point $p\in M$, define geodesic normal coordinates $(x_1^{(p)}, x_2^{(p)}, \dots, x_n^{(p)})$. For $u\in C^\infty(M)$ put $\partial_j u(p) = \partial_{x_j^{(p)}}u(p)$ and for a multiindex $J=(j_1,j_2,\dots,j_n)$ set 
\[\partial^J u(p) = \partial_1^{j_1}\partial_2^{j_2}\dots\partial_n^{j_n} u(p).\]
Finally, for $j\in\mathbb N$ define 
\[|\partial^j u (p)|_\infty = \max\{|\partial^J u(p)|\, \mid j_1+j_2+\dots+ j_n = j\}.\]
Since $X=M/G$ is compact, we may choose finitely many points $(p_k)_1^N$ in $M$ such that there exist open balls $(U_{p_k})_1^N$ centered at these points with the following properties:
\begin{enumerate}
\item For each $p\in(p_k)_1^N$, the neighborhood $U_p$ has geodesic coordinates $(x_1^{(p)},x_2^{(p)},\dots,x_n^{(p)})$ that extend beyond its closure.
\item The $G$-translates of the union $\bigcup_k U_{p_k}$ cover $M$. 
\end{enumerate}
The action of $t\in G$ on $p\in M$ we write simply $p\mapsto pt$ and we denote by $\rho_t$ the right-translation on functions; $(\rho_t u)(p) = u(pt)$ for $p\in M$, $t\in G$. For $t\in G$, in the neighborhood $U_{pt} := U_p\cdot t$, $p\in (p_k)_1^N$, we thus obtain geodesic coordinates from the translates of the coordinates in $U_p$; 
\[(x_1^{(pt)},\dots,x_n^{(pt)})\quad {\rm with}\quad x_j^{(pt)} := \rho_t x_j^{(p)}.\] 
\subsection{Uniform classes of proper pseudodifferential operators on $M$} These are defined similarly in \cite{MS2} and \cite[\S2]{Ko1}.
\begin{definition}\label{defulg} Let $m$ be a real number. The class $UL^m(M)$ consists of the operators $A$ on $M$ with Schwartz kernel $K_A$ such that 

\begin{enumerate}

\item[(i)] There exists a constant $C_A>0$ such that $K_A(p,q)=0$ whenever ${\rm dist}_g(p,q)>C_A$.

\item[(ii)] $K_A$ belongs to $C^\infty(M\times M\setminus\Delta)$ where $\Delta=\{(p,p)\mid p\in M\}$, and for each $\epsilon>0$ satisfies the estimates 
\[ |\partial^j_p \partial^k_q K_A(p,q)|_\infty \le C_{jk\epsilon}, \qquad ({\rm dist}_g(p,q)\ge\epsilon>0), \]
where $j,k$ are arbitrary, and the subscripts on the derivatives indicate that the derivatives act with respect to the appropriate slot of $K_A$.
\item[(iii)] The family $\{A_{p_k t}: C_0^\infty(U_{p_k t}) \to C^\infty(U_{p_k t})\}$ of restrictions of $A$ to $U_{p_k t}$ forms a collection of operators in $L^m(U_{p_k t})$ for which the bounds in \eqref{Sm}, in terms of the coordinates $(x_j^{(p_k t)})_j$, are uniform with respect to $t\in G$.
\end{enumerate}
Define also the class $UL^{-\infty}(M)$ to be those operators satisfying condition (i) above, but which also obey $K_A\in C^\infty(M\times M)$ and, as in (ii),
\[ |\partial^j_p \partial^k_q K_A(p,q)|_\infty \le C_{jk}, \]
but with no restriction on the distance between $p$ and $q$.
\end{definition}

\begin{rem}\label{3facts} Let us collect some interpretations and consequences.

\begin{enumerate}
\item As usual, the operator $A_p(x^{(p)},D_{x^{(p)}}): C_c^\infty(U_p) \to C^\infty(U_p)$ will be given by its local representations 
\[A_p(x^{(p)}, D_{x^{(p)}})u(x^{(p)}) = \int\dbar\xi\int dy^{(p)}\, e^{i(x^{(p)} - y^{(p)})\cdot\xi} a_p(x^{(p)},\xi)u(y^{(p)}).\]
\item An operator can be pieced together from local representations by a special covering of $M$ of finite multiplicity as in Lemma 3.1 and Prop.\ 3.1 of \cite{MS2}.
\item Condition (iii) gives that, in the coordinates $x^{(pt)} = (x_j^{(pt)})_j$, the operator $A_{pt}$ can be written as a sum 
\[a_{pt}(x^{(pt)}, D_{x^{(pt)}}) + R_{pt}\quad {\rm with} \quad a_{pt}\in S^m(U_{pt}),\quad R_{pt}\in L^{-\infty}(U_{pt}),\]
and the symbols $a_{pt}(x^{(pt)}, \xi)$ satisfy the estimates \eqref{Sm} uniformly in $t\in G$ and $p\in(p_k)_1^N$.
\item It turns out that $UL^{-\infty}(M) = \bigcap_{m\in\mathbb R} UL^m(M)$ and thus these operators have bounded extensions to $L^2$ by Schur's lemma.
\end{enumerate}
\end{rem}
\begin{example}\label{ex1} The tangent bundle of $M$ has a natural decomposition as follows. The {\it vertical space} $V_p\subset T_pM$ consists of those tangents in the kernel of $d\pi:TM\to TX$ and at each point is canonically isomorphic to the Lie algebra $\mathfrak g$ of $G$. A {\it horizontal space} $H_p\subset T_pM$ is a complement to the vertical space. The differential $d\pi$ maps $H_p\subset T_pM$ isomorphically to $T_{\pi(p)}X$, thus a tangent vector in $T_qX$, lifts uniquely to a tangent 
vector in $H_p$ at any $p\in M$ such that $\pi(p)=q$. In particular, we can uniquely lift vector fields of $X$ to horizontal vector fields of $M$ and these lifts can be integrated to yield local sections $X\supset O\hookrightarrow M$. In the presence of an invariant Riemannian metric, we choose $TM = V\oplus H$ where the direct sum is one of $G$-subbundles in $TM$. 

Since $G$ is a Lie group, $TG$ has global frame fields, but it may happen that $TX$ does not. Still, using an invariant partition of unity as in the proof of Lemma \ref{gequiv}, we may choose an orthonormal frame of invariant vector fields $X_1,\dots, X_n$ of the restriction of $TM$ to ${\rm supp}(\varphi_k)$ such that $X_1,\dots, X_d$ span $V\cong\mathfrak g$, and $X_{d+1},\dots, X_n$ span $H$. For any multiindex $J$, the operator $X^J$ is invariant on $M$ of order $|J| = \sum j_k$. As usual, operators built of these objects on each of the sets ${\rm supp}(\varphi_k)$ can be added to form global operators.

An operator $A=\sum_{|J|\le m} a_J X^J$ is in $UL^m(M)$ iff $|X^J a_K |\le C_{JK}$ for any multiindices $J, K$ with $|J|\le m$ and such an operator is $G$-invariant if and only if the functions $a_K$ are constant in the vertical directions; {\it i.e.}\ $X a_K=0$ for $X\in V$. \end{example}
%
%%%%%%%%%%%
\subsection{Properties of the uniform classes} At this point, we will list a collection of properties of the operators in $UL^m(M)$. These follow from the results of \cite[\S2]{Ko1}.  
\begin{prop} If $A\in UL^m(M)$, then its formal adjoint $A^*$ is also in the same class. If $A\in UL^{m_1}(M)$ and $B\in UL^{m_2}(M)$, then $AB\in UL^{m_1+m_2}(M)$. Also, $A\in UL^m(M)$ and $B\in UL^{-\infty}(M)$, imply that $AB, BA\in UL^{-\infty}(M)$.   \end{prop}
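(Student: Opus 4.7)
The plan is to reduce all three assertions to the local H\"ormander symbol calculus in Euclidean charts, while verifying that each of the three defining conditions in Definition~\ref{defulg} is preserved under adjunction and composition. The heavy lifting has already been carried out by Kordyukov in \cite[\S2]{Ko1} in the general bounded-geometry setting; the content of this proposition is essentially that those arguments transfer to our $G$-bundle setup. This works because, by Lemma~\ref{gequiv}, the invariant metric $g$ makes $M$ a manifold of bounded geometry equipped with a $G$-invariant orthonormal frame, and because the covering $(U_{p_k t})_{k,t}$ is generated by $G$-translation of the finite data $(U_{p_k})_{k=1}^N$.

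For the adjoint $A^*$: condition (i) is immediate from $K_{A^*}(p,q)=\overline{K_A(q,p)}$ and the symmetry of $\mathrm{dist}_g$; condition (ii) follows by swapping the roles of $p$ and $q$ in the derivative estimates of (ii) for $A$; condition (iii) is the standard asymptotic expansion $a^* \sim \sum_\alpha \tfrac{1}{\alpha!}\,\partial_\xi^\alpha D_x^\alpha \bar a$ applied in each chart $U_{p_k t}$. The uniformity in $t\in G$ is automatic because $x_j^{(p_k t)}=\rho_t x_j^{(p_k)}$ and because the estimates \eqref{Sm} for $a_{p_k t}$ were assumed uniform in $t$ to begin with.

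For the composition $AB$ with $A\in UL^{m_1}(M)$, $B\in UL^{m_2}(M)$: condition (i) follows from the triangle inequality with $C_{AB}=C_A+C_B$; condition (ii) is verified by splitting the kernel $K_{AB}(p,r)=\int K_A(p,q)K_B(q,r)\,dq$ into regions where $q$ is far from $p$, far from $r$, or close to both, and applying the off-diagonal smoothness estimates for the two factors together with the proper support. Condition (iii) uses the standard composition formula in each chart. The one mild subtlety is that the local representative of $AB$ in $U_{p_k t}$ may pick up contributions from the operators on neighboring charts $U_{p_j s}$, but the bounded-geometry hypothesis guarantees that this is a finite sum whose multiplicity is independent of $t$, and the transition maps between neighboring charts together with their derivatives are uniformly controlled because the whole configuration is $G$-translated from a finite list. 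The third claim is then immediate from the composition claim and Remark~\ref{3facts}(4): if $B\in UL^{-\infty}=\bigcap_{m'}UL^{m'}(M)$, then $AB\in\bigcap_{m'}UL^{m+m'}(M)=UL^{-\infty}(M)$, and likewise for $BA$.

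The principal technical obstacle is not the algebra of symbols, which is classical, but the uniformity in $t\in G$ of the constants in \eqref{Sm} for the composed or adjoint symbol, and the finite-multiplicity bookkeeping required to sum local pieces into a globally defined kernel. This uniformity is precisely what the bounded-geometry framework of \cite{Ko1} is designed to deliver; in our $G$-equivariant setting it becomes automatic because the frame, the geodesic coordinates, and the covering are all $G$-translates of finite data, so nothing in the construction can degenerate as $t$ varies.
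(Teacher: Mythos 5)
Your proposal is correct and follows the same route as the paper, which proves this proposition simply by citing the bounded-geometry calculus of Kordyukov \cite[\S2]{Ko1}; your chart-by-chart verification of conditions (i)--(iii), with uniformity in $t$ coming from the fact that the coordinates and covering are $G$-translates of finite data, is exactly the content that citation is meant to supply. The deduction of the $UL^{-\infty}$ statement from the composition claim together with Remark~\ref{3facts}(4) is also how the paper intends it.
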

The usual $L^2$ continuity of the zero class holds:
\begin{prop}\label{0cont} If $m\le 0$ and $A\in UL^m(M)$, then there exists a $C>0$ such that $\|Au\|_{L^2(M)}\le C\|u\|_{L^2(M)}$ for all $u\in C^\infty_c(M)$. Thus, $A$ can be extended to a bounded linear operator in $L^2(M)$. \end{prop}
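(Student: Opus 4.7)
\medskip

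The plan is to reduce the global $L^2$ bound to local ones that are uniform in the covering parameter, and then patch them together using properness and the bounded geometry of $M$.

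First, I would handle the local $L^2$ continuity. Condition (iii) of Definition \ref{defulg} together with item (3) of Remark \ref{3facts} gives that, in the geodesic normal coordinates $(x_j^{(p_kt)})_j$, each restriction $A_{p_kt}$ is a sum $a_{p_kt}(x^{(p_kt)}, D_{x^{(p_kt)}}) + R_{p_kt}$ with symbol $a_{p_kt} \in S^m(U_{p_kt})$, whose bounds (\ref{Sm}) are uniform in $t \in G$ and $k$, and a smoothing remainder. For $m \le 0$, the standard $L^2$ continuity theorem for $\Psi DO$s on Euclidean open sets (e.g.\ Calder\'on--Vaillancourt, or the proof given in \cite{Ko1, MS2}) then yields a single constant $C_0$ such that $\|A_{p_kt} u\|_{L^2(U_{p_kt})} \le C_0 \|u\|_{L^2(U_{p_kt})}$ for every $u \in C_c^\infty(U_{p_kt})$, with $C_0$ depending only on the uniform symbol seminorms of $A$.

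Next, I would glue these bounds together. Using the bounded geometry of $M$ (coming from Lemma \ref{gequiv}), choose a uniformly locally finite partition of unity $\{\psi_\alpha\}$ subordinate to the cover of $M$ by $G$-translates of the $U_{p_k}$, as in Lemma 3.1 and Prop.\ 3.1 of \cite{MS2}. Let $N_0$ denote a uniform bound on the multiplicity of the cover. Decompose $A = \sum_{\alpha, \beta} \psi_\alpha A \psi_\beta$; by the proper support condition (i), only pairs with $\mathrm{dist}_g(\mathrm{supp}\,\psi_\alpha, \mathrm{supp}\,\psi_\beta) \le C_A$ contribute, and for each such pair both supports lie inside a single chart of the special cover (possibly after a controlled enlargement given by bounded geometry), so the operator $\psi_\alpha A \psi_\beta$ is controlled by the local bound of the previous step.

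Finally, I would combine these pieces via a Cotlar--Stein or Schur-type estimate: because the cover has multiplicity at most $N_0$ and because only a bounded number of pairs $(\alpha,\beta)$ per $\alpha$ are active (again by proper support plus bounded geometry), the global norm is controlled by a constant multiple of $C_0$, independent of the point in $M$. The density of $C_c^\infty(M)$ in $L^2(M)$ then gives the extension to a bounded operator on all of $L^2(M)$. The main technical obstacle is carefully tracking the uniformity in the cover parameter $t \in G$ and ensuring that the bounded-geometry constants suffice to make the Schur-type summation finite; however, this is exactly the setting in which Kordyukov's framework in \cite{Ko1} was developed, and our hypotheses match his so that the argument transposes directly.
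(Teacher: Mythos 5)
Your overall strategy is the right one, and it is in fact the argument the paper itself relies on: the paper gives no proof of Prop.~\ref{0cont}, stating only that it follows from \cite[\S2]{Ko1}, and the scheme there (and in \cite{MS2}) is exactly what you describe --- uniform local $L^2$ bounds from the uniform symbol estimates in condition (iii), patched by a uniformly locally finite partition of unity with a Schur-type summation controlled by proper support and bounded geometry.

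One step of your sketch, however, would fail as literally written. You claim that every pair $(\alpha,\beta)$ with $\mathrm{dist}_g(\mathrm{supp}\,\psi_\alpha,\mathrm{supp}\,\psi_\beta)\le C_A$ has both supports inside a single chart of the special cover ``after a controlled enlargement.'' The charts are geodesic balls of radius bounded by the injectivity radius, while the proper-support constant $C_A$ can be much larger; such pairs cannot be forced into one normal chart, and the enlargement cannot exceed $r_{\rm inj}$. The standard repair, which stays entirely within your framework, is to split $A$ into a near-diagonal part, whose kernel is supported within chart scale of the diagonal and which is handled by the uniform local symbol calculus exactly as you do, and an off-diagonal part, whose kernel is smooth with the uniform bounds of condition (ii) of Definition~\ref{defulg} and still properly supported, hence $L^2$-bounded by Schur's lemma (the same mechanism invoked in item (4) of Remark~\ref{3facts} for $UL^{-\infty}(M)$). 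With that splitting, the local Calder\'on--Vaillancourt-type bound, the bounded multiplicity of the cover, and the finite number of interacting pairs per index give the global estimate, and density of $C^\infty_c(M)$ finishes the extension as you say.
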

Let us now deal with ellipticity in the classes $UL^m(M)$ and the construction of Sobolev spaces. The treatment is identical to that of \cite[\S3]{Ko1}.
\begin{definition} An operator $A\in UL^m(M)$ is said to be \emph{uniformly elliptic} if there exist constants $C_1, C_2, C_3$ such that the symbols $a_{pt}(x^{(pt)},\xi)$ of the operators $A_{pt} =A|_{U_{pt}}$, in the selected coordinates of $U_{pt}$, satisfy
\[C_1 |\xi|^m \le |a_{pt}(x^{(pt)},\xi)| \le C_2 |\xi|^m,\]
uniformly in $U_{pt}$ and for $|\xi|>C_3,\ t\in G,\, p\in (p_k)_1^N$. The class of uniformly elliptic operators in $UL^m(M)$ we will denote by $EUL^m(M)$.\end{definition}
\begin{lemma} \label{lamelambdas} For any $s\in\mathbb R$ there exists an operator $\Lambda_s\in EUL^s(M)$. Furthermore, by taking $P=\Lambda_{s/2}^*\Lambda_{s/2}$ or $\bid +\Lambda_{s/2}^*\Lambda_{s/2}$, we obtain nonnegative and positive operators in the same class.\end{lemma}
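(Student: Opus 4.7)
The plan is to build $\Lambda_s$ by the standard device of patching together local elliptic symbols $(1+|\xi|^2)^{s/2}$ via the distinguished cover of $M$ constructed just before Definition \ref{defulg}, and then obtaining the positive/nonnegative refinements by composition with the adjoint.

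First I would set up the patching. Let $U = \bigcup_k U_{p_k}$ so that, by construction, the $G$-translates $\{U_{p_k t}\}_{k,t}$ cover $M$ with uniformly bounded local geometry. Using the invariant partition-of-unity construction of Lemma \ref{gequiv}, I can pick smooth functions $\varphi_k$ with $\mathrm{supp}(\varphi_k) \subset U_{p_k}$ summing to $1$ on a fundamental domain for the $G$-action on $U$, together with cutoffs $\psi_k \in C_c^\infty(U_{p_k})$ equal to $1$ on $\mathrm{supp}(\varphi_k)$. Translating these via $\rho_t$, we obtain a locally finite family $\{\varphi_k^{(t)}, \psi_k^{(t)}\}$ with uniform $C^\infty$ bounds (by compactness of $X$ and the $G$-invariance of the geometry). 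In the coordinates $(x_j^{(p_k t)})_j$ on $U_{p_k t}$, define the local operator
\[
\Lambda_s^{(k,t)} u(x) = \int \dbar\xi \int dy\, e^{i(x-y)\cdot\xi}\, \chi(x-y)\, (1+|\xi|^2)^{s/2} u(y),
\]
where $\chi \in C_c^\infty(\mathbb R^n)$ is a fixed cutoff equal to $1$ near $0$, supported in a ball of radius less than $r_{\mathrm{inj}}/2$. The cutoff ensures the Schwartz kernel is properly supported, and the symbol $(1+|\xi|^2)^{s/2}$ lies in $S^s$ with bounds independent of $(k,t)$. Then set
\[
\Lambda_s = \sum_{k,t\text{-orbit reps}} \varphi_k^{(t)}\, \Lambda_s^{(k,t)}\, \psi_k^{(t)},
\]
where the sum is organized as in \cite[Lemma 3.1]{MS2} (or Rem.\ \ref{3facts}(2)) using the finite multiplicity of the covering.

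Next I would verify conditions (i)--(iii) of Definition \ref{defulg}. Property (i) is immediate from the support of $\chi$ and of the $\varphi_k^{(t)}$, both of which are bounded in the $g$-metric, combined with the finite multiplicity. Property (ii) follows from the standard pseudolocal estimate: off the diagonal, the kernel of each summand is smooth with uniform bounds on every derivative (coming from repeated integration by parts in $\xi$ against $(1+|\xi|^2)^{s/2}\chi(x-y)$), and the uniformity passes to the sum. Property (iii) is the central point: the local representation of $\Lambda_s$ in $U_{p_k t}$ is the operator with symbol $(1+|\xi|^2)^{s/2}$ plus correction terms that come from the $\varphi_l^{(t')}$ overlapping $U_{p_k t}$; these overlapping terms are symbols in $S^s$ with bounds uniform in $(k,t)$ because only finitely many indices can contribute at each point, the contributing indices are controlled by the uniform geometry, and the symbol itself is $G$-invariant in the sense that it depends only on $\xi$. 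Uniform ellipticity follows since in each chart the leading symbol is $(1+|\xi|^2)^{s/2}$, giving $C_1|\xi|^s \le |a_{p_k t}(x,\xi)| \le C_2 |\xi|^s$ for $|\xi|$ large, with constants independent of $(k,t)$. Thus $\Lambda_s \in EUL^s(M)$.

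For the second half, $\Lambda_{s/2} \in EUL^{s/2}(M)$ by what we just proved. By the adjoint property in Prop.\ 2.5, $\Lambda_{s/2}^* \in UL^{s/2}(M)$, and by the composition property, $P := \Lambda_{s/2}^*\Lambda_{s/2} \in UL^s(M)$. Nonnegativity is immediate: $\langle Pu, u\rangle = \|\Lambda_{s/2} u\|^2 \ge 0$ for $u \in C_c^\infty(M)$. Ellipticity of $P$ follows from the principal-symbol calculus in the local charts: the leading symbol of $\Lambda_{s/2}^*\Lambda_{s/2}$ is $|a_{p_k t}^{(s/2)}(x,\xi)|^2 \sim (1+|\xi|^2)^{s/2}$ with the same uniform bounds, hence $P \in EUL^s(M)$. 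Finally, $\bid + P$ lies in the same class because $\bid \in UL^0(M) \subset UL^s(M)$ when adding, and $\langle (\bid+P)u,u\rangle \ge \|u\|^2 > 0$ for $u\ne 0$, giving the positive operator claimed.

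The main obstacle is Property (iii): one must check that the overlapping local representations after summation still produce a symbol in $S^s$ with bounds uniform in the group parameter $t$. This relies crucially on the $G$-invariance of the coordinate system (so that the symbol $(1+|\xi|^2)^{s/2}$ is literally the same in every chart $U_{p_k t}$) and on the finite multiplicity of the cover of $M$ by the $U_{p_k t}$, which is guaranteed by the compactness of $X = M/G$.
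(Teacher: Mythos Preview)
The paper does not supply its own proof of this lemma; it is listed among the properties imported wholesale from \cite[\S3]{Ko1}, where the construction on manifolds of bounded geometry is carried out in detail. Your argument --- patching properly supported local quantizations of $(1+|\xi|^2)^{s/2}$ over a locally finite cover with uniform $C^\infty$ bounds, then reading off uniform ellipticity from the principal symbol and the nonnegative/positive refinements from the adjoint and composition properties --- is the standard construction and is correct; it is essentially what one finds in \cite{Ko1} and \cite{MS2}.

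Two remarks. First, your claim that $\bid + P \in EUL^s(M)$ via ``$\bid \in UL^0(M) \subset UL^s(M)$'' is only valid for $s \ge 0$; when $s<0$ the sum $\bid + P$ has order $0$, not $s$. This is a looseness already present in the statement of the lemma rather than a flaw in your argument, and only the case $s>0$ is used later. Second, it is worth comparing with the paper's later Lemma~\ref{goodlambdas}, where the \emph{invariant} version is proved by quantizing the globally defined, $G$-invariant symbol built from the Riemannian length $|\xi|_g$; your patched operator need not be $G$-invariant, but Lemma~\ref{lamelambdas} does not require it to be.
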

Operators in $EUL^m(M)$ possess parametrices:
\begin{prop}\label{badparam} If $A\in EUL^m(M)$, then there exists an operator $B\in EUL^{-m}(M)$ such that 
\[BA = \bid - R_1, \quad AB = \bid - R_2, \quad {\rm with} \quad R_1, R_2\in UL^{-\infty}(M).\]
\end{prop}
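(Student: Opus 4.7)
The plan is to mimic the standard parametrix construction in the H\"ormander calculus, but carefully tracking uniformity in $t\in G$ and $p\in(p_k)_1^N$ throughout, so that the output lands in the uniform class $UL^{-m}(M)$. The main technical point is that ellipticity is given with constants independent of $(p,t)$, so all local symbol inversions will carry uniform bounds, and the gluing will be done with a covering of finite multiplicity so uniform estimates survive.

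First, I would work locally. On each coordinate patch $U_{pt}$, Remark \ref{3facts}(3) provides a symbol $a_{pt}(x^{(pt)},\xi)\in S^m(U_{pt})$ whose estimates \eqref{Sm} are uniform in $(p,t)$, and the ellipticity condition gives $|a_{pt}(x^{(pt)},\xi)|\ge C_1|\xi|^m$ for $|\xi|>C_3$, again uniformly. Choose a cutoff $\chi\in C^\infty(\mathbb{R}^n)$ with $\chi=0$ for $|\xi|\le C_3$ and $\chi=1$ for $|\xi|\ge 2C_3$. Put $b^{(0)}_{pt}(x,\xi)=\chi(\xi)/a_{pt}(x,\xi)$. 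An induction, using Leibniz's rule and the uniform bounds on $a_{pt}$ and its derivatives, shows that $b^{(0)}_{pt}\in S^{-m}(U_{pt})$ uniformly in $(p,t)$. Define symbols $b^{(j)}_{pt}\in S^{-m-j}(U_{pt})$ recursively by the usual composition/Neumann formula
\[
b^{(N)}_{pt}\sim\sum_{j=0}^{N} c^{(j)}_{pt},
\]
where each $c^{(j)}_{pt}$ is a polynomial expression in derivatives of $a_{pt}$ divided by appropriate powers of $a_{pt}$. Uniformity in $(p,t)$ of the $S^m$-seminorms of $a_{pt}$ and of the lower bound for $|a_{pt}|$ on $|\xi|>C_3$ passes directly to uniformity of the $S^{-m-j}$-seminorms of the $c^{(j)}_{pt}$, so asymptotic summation \`a la Borel yields $b_{pt}\in S^{-m}(U_{pt})$ with uniform bounds, and such that $a_{pt}\#b_{pt}-1$ and $b_{pt}\#a_{pt}-1$ have symbols in $S^{-\infty}$ uniformly in $(p,t)$.

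Next, I would globalize. Using a covering of $M$ of finite multiplicity by neighborhoods of the form $U_{p_k t}$, as in \cite[Lemma 3.1, Prop.\ 3.1]{MS2} or \cite[\S2]{Ko1}, together with a $G$-invariant partition of unity of the type constructed in the proof of Lemma \ref{gequiv}, I would glue the local operators $b_{pt}(x^{(pt)},D_{x^{(pt)}})$ to produce a global operator $B$ on $M$. Properness (item (i) of Definition \ref{defulg}) is imposed by multiplying the Schwartz kernel of the assembled operator by a smooth cutoff of $\mathrm{dist}_g(p,q)$ supported in $\{\mathrm{dist}_g(p,q)<C_B\}$ with $C_B$ larger than the diameter of the $U_{p_k t}$; since the kernels of pseudodifferential operators are $C^\infty$ off the diagonal with uniform bounds (item (ii)), the tails discarded by this cutoff contribute an operator in $UL^{-\infty}(M)$. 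The finite multiplicity of the cover guarantees that the pointwise-finite sums used in gluing introduce no loss of uniformity, so $B\in UL^{-m}(M)$, and invertibility of the principal symbol of $B$ places $B$ in $EUL^{-m}(M)$.

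Finally, I would verify the parametrix identities. By construction, the local compositions $B_{p_kt}\circ A_{p_kt}$ and $A_{p_kt}\circ B_{p_kt}$ differ from the identity by operators whose symbols lie in $S^{-\infty}$ uniformly in $(p_k,t)$. The composition theorem for the class $UL^m(M)$ (the first proposition of this subsection) combined with the uniform off-diagonal decay of the glued kernel then yields that $R_1=\mathbf{1}-BA$ and $R_2=\mathbf{1}-AB$ have Schwartz kernels in $C^\infty(M\times M)$ with uniform derivative bounds and of compact off-diagonal extent, i.e.\ lie in $UL^{-\infty}(M)$. The main obstacle is the bookkeeping in the gluing: one must check that the remainders contributed both by truncating in $\xi$ (finitely many terms of the asymptotic series) and by truncating in $(p,q)$-distance are uniformly smoothing in the sense required by Definition \ref{defulg}; once this is confirmed using the bounded-multiplicity cover and the uniform constants from Kordyukov's setup \cite[\S2]{Ko1}, the proof is complete.
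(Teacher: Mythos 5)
Your construction is correct and is essentially the argument the paper relies on: the paper gives no independent proof of Prop.\ \ref{badparam}, but defers to the uniform calculus of Kordyukov \cite{Ko1} (and Meladze--Shubin \cite{MS2}), where the parametrix is built exactly as you describe --- local symbol inversion with seminorms controlled by the uniform ellipticity constants, asymptotic summation, and gluing over a finite-multiplicity cover so that proper support and the uniformly smoothing remainders survive.
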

In terms of Lemma \ref{lamelambdas}, one could define Sobolev spaces $H^s(M)$ invariant under the group action (but not necessarily on which $G$ acts unitarily); see \cite[\S3]{Ko1}. This invariance and the compactness of $X$ would imply that the $H^s(M)$ would not depend on the definition taken, and thus these are natural objects. We will do better later, but already we can state versions of Sobolev space continuity and elliptic regularity for $EUL^m(M)$: 
\begin{prop}\label{sobodiff} If $A\in UL^m(M)$ and $s\in\mathbb R$, then $A$ extends to a continuous linear operator $A: H^s(M)\to H^{s-m}(M)$. Also, if $A\in EUL^m(M)$, $u\in H^{-\infty}(M)$, and $Au\in H^s(M)$, then $u\in H^{s+m}(M)$.\end{prop}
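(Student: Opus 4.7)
The plan is to use Lemma \ref{lamelambdas} to set up the Sobolev scale and then to reduce both statements to the $L^2$-continuity of $UL^0(M)$ (Proposition \ref{0cont}) combined with the composition properties of the uniform classes.

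First, for each $s\in\mathbb R$ I would choose a positive operator $P_s := \bid + \Lambda_{s/2}^*\Lambda_{s/2}\in EUL^s(M)$ as produced by Lemma \ref{lamelambdas}, and define $\|u\|_s := \|P_s u\|_{L^2(M)}$ for $u\in C^\infty_c(M)$, letting $H^s(M)$ be the completion. Negative orders are handled either by the same construction or by duality, and $H^{-\infty}(M) = \bigcup_s H^s(M)$, $H^\infty(M) = \bigcap_s H^s(M)$. Proposition \ref{badparam} furnishes a parametrix $Q_s\in EUL^{-s}(M)$ with $Q_s P_s = \bid - R_s$ and $R_s\in UL^{-\infty}(M)$.

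For Sobolev continuity, fix $A\in UL^m(M)$ and $s\in\mathbb R$. By the composition rule in the first proposition of \S2.4,
\[
P_{s-m}\, A\, Q_s \;\in\; UL^{(s-m)+m+(-s)}(M) \;=\; UL^0(M),
\]
which is $L^2$-bounded by Proposition \ref{0cont}. For $u\in C^\infty_c(M)$ the identity $u = Q_s P_s u + R_s u$ holds in $C^\infty(M)$ (all operators are properly supported), so
\[
\|Au\|_{s-m} \;\le\; \|(P_{s-m}A Q_s)(P_s u)\|_0 + \|P_{s-m}A R_s u\|_0 \;\lesssim\; \|u\|_s + \|P_{s-m}A R_s u\|_0.
\]
The residual $P_{s-m}A R_s$ lies in $UL^{-\infty}(M)$, and a short bootstrap on top of Remark \ref{3facts}(4) shows that every such operator extends continuously between all pairs of Sobolev spaces: conjugating by the $P_\bullet$'s again lands in $UL^0(M)$, so $L^2$-boundedness transfers to the desired Sobolev estimate. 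This yields $\|Au\|_{s-m}\lesssim\|u\|_s$ on the dense subset $C^\infty_c(M)$, hence a continuous extension $A\colon H^s(M)\to H^{s-m}(M)$.

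For elliptic regularity I would invoke Proposition \ref{badparam} to obtain $B\in EUL^{-m}(M)$ with $BA = \bid - R_1$, $R_1\in UL^{-\infty}(M)$. Given $u\in H^{-\infty}(M)$ with $Au\in H^s(M)$, the continuity statement just established, applied to $B$, yields $BAu\in H^{s+m}(M)$; the Sobolev continuity of $UL^{-\infty}$-operators gives $R_1 u\in H^\infty(M)\subset H^{s+m}(M)$. The identity $u = BAu + R_1 u$ then places $u$ in $H^{s+m}(M)$. The main technical obstacle, used in both parts, is establishing that every operator in $UL^{-\infty}(M)$ extends continuously $H^{s'}(M)\to H^{t'}(M)$ for all $s',t'$; this is standard on a manifold of bounded geometry and is carried out in \cite[\S3]{Ko1}, whose hypotheses apply here thanks to Lemma \ref{gequiv}. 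A minor point is that the parametrix identities, initially valid on $C^\infty_c(M)$, extend to distributional Sobolev spaces once the continuity part is known, as both sides are then continuous and agree on a dense subspace.
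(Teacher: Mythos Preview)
Your argument is correct and is essentially the standard one: reduce Sobolev continuity to $L^2$-boundedness of order-zero operators via conjugation by $\Lambda$-type operators, and deduce elliptic regularity from the parametrix identity $u = BAu + R_1 u$ together with the $H^{s'}\to H^{t'}$ mapping property of $UL^{-\infty}(M)$. The paper does not supply its own proof of this proposition; it simply records it among the properties that ``follow from the results of \cite[\S2]{Ko1}'' and notes that ``the treatment is identical to that of \cite[\S3]{Ko1}.'' Your write-up is precisely the argument that reference carries out in the bounded-geometry setting, so there is no divergence in approach. One cosmetic point: for $s<0$ the operator $\bid+\Lambda_{s/2}^*\Lambda_{s/2}$ lands in $EUL^0(M)$ rather than $EUL^s(M)$, so your parenthetical ``or by duality'' is the route that actually works there; this does not affect the rest of the argument.
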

Prop.\ \ref{sobodiff} is the main tool in demonstrating the following
\begin{prop}\label{esssa} If the operator $A\in EUL^m(M)$ is formally self-adjoint, then it is essentially self-adjoint and its closure in $L^2(M)$ has domain equal to $H^m(M)$.\end{prop}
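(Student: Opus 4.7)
The plan is the classical two-step approach: combine elliptic regularity to identify $\dom(A^*)$ with $H^m(M)$, and then a density argument in $H^m(M)$ to see that the minimal and maximal realizations of $A$ coincide.

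First I would set up notation. Let $A_0$ denote the operator $A$ with domain $C_c^\infty(M)$, so $A_0$ is symmetric in $L^2(M)$. Let $A_{\min}$ be its closure and $A_{\max}=A_0^*$ its Hilbert-space adjoint. One always has $A_{\min}\subset A_{\max}$, and essential self-adjointness is equivalent to $A_{\min}=A_{\max}$.

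Step 1: identify $\dom(A_{\max})$ with $H^m(M)$. If $u\in\dom(A_{\max})$, then by definition $u\in L^2(M)$ and $A u \in L^2(M)$ in the distributional sense. Since $A\in EUL^m(M)$, the elliptic regularity statement in Proposition \ref{sobodiff} applied to $u\in H^{-\infty}(M)$ with $Au\in H^0(M)=L^2(M)$ gives $u\in H^m(M)$. Hence $\dom(A_{\max})\subset H^m(M)$.

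Step 2: show $H^m(M)\subset\dom(A_{\min})$. By the continuity half of Proposition \ref{sobodiff}, $A:H^m(M)\to L^2(M)$ is continuous, so the graph norm of $A$ on $H^m(M)$ is dominated by the $H^m$-norm. It therefore suffices to show that $C_c^\infty(M)$ is dense in $H^m(M)$. Given $u\in H^m(M)$, I would approximate in two stages. First, exploit the bounded geometry of $(M,g)$ established via Lemma \ref{gequiv} and the standard construction (see \cite{Ko1,Sh}) to produce cutoffs $\chi_R\in C_c^\infty(M)$ with $\chi_R=1$ on $B_R(p_0)$, $\mathrm{supp}(\chi_R)\subset B_{2R}(p_0)$, and $|\partial^J\chi_R|_\infty\le C_{|J|}$ uniformly in $R$; then $\chi_R u\to u$ in $H^m(M)$. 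Second, mollify $\chi_R u$ with a family of smoothing operators built from the uniform structure of $M$ (for instance, approximate identities constructed in local coordinate patches from the finite cover in Definition \ref{defulg}, glued with an invariant partition of unity). The resulting approximants lie in $C_c^\infty(M)$ and converge in $H^m(M)$.

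Combining the two steps gives $\dom(A_{\min})\supset H^m(M)\supset\dom(A_{\max})\supset\dom(A_{\min})$, so all three are equal to $H^m(M)$ and $A_{\min}=A_{\max}$, establishing essential self-adjointness with the stated domain. The main obstacle is the density assertion in Step 2: one must verify that both the cutoff and the mollification steps are controlled uniformly in the $H^m$-norm, which is where the bounded geometry established after Lemma \ref{gequiv} (and hence the machinery imported from \cite{Ko1}) is essential. All other pieces are immediate consequences of Proposition \ref{sobodiff}.
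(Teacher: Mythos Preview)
Your argument is correct and is precisely the standard route. Note that the paper does not actually supply a proof of Proposition~\ref{esssa}: it is listed among the properties ``follow[ing] from the results of \cite[\S2]{Ko1}'', with only the remark that ``Prop.~\ref{sobodiff} is the main tool in demonstrating'' it. Your two-step plan (elliptic regularity to get $\dom(A_{\max})\subset H^m(M)$, then density of $C_c^\infty(M)$ in $H^m(M)$ via bounded-geometry cutoffs and mollifiers to get $H^m(M)\subset\dom(A_{\min})$) is exactly the argument one finds in Kordyukov's paper and matches the paper's hint. The later invariant analogue, Proposition~\ref{cms}, is proved in the paper by the same scheme (regularization plus exhaustion by cutoffs), citing Atiyah and Connes--Moscovici for the details; your proposal is the non-invariant version of that same outline.
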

\begin{rem} The $L^2$ continuity of the zero class (Prop.\ \ref{0cont}) and the Sobolev mapping properties of elliptic operators (Prop.\ \ref{sobodiff}) hold true also in $L^p$ and the corresponding $L^p$--Sobolev spaces, respectively, for $1<p<\infty$. See \cite[\S XI.2]{Ta} for proofs in the case of compact manifolds. For the complete $L^p$ theory, the reader is directed to \cite{Ko1}. \end{rem}
%
%%%%%%%%%%%%%%%%%%%
\section{Hilbert modules and traces}\label{HMT}
%%%%%%%%%%%%%%%%%%%
%
\subsection{Invariant Hilbert space decompositions and von Neumann algebras}\label{hook} In this section we discuss operators $A$ that are invariant under the action of $G$ on $M$. Such operators' Schwartz kernels $K_A$ have the following property:
\begin{equation}\label{sym}K_A(p,q) = K_A(pt,qt), \qquad (p,q\in M,\, t\in G). \end{equation}
Taking a piecewise smooth section $\sigma:X\hookrightarrow M$ and using it to represent points $p\in M$ as pairs $p = \sigma(x)t \leftrightarrow (t,x)\in G\times X$, the relation \eqref{sym} allows us to write 
\begin{equation}\label{conv}\kappa(st^{-1}; x,y) := K_A(\sigma(x)s,\sigma(y)t)=K_A(p,q)\end{equation}
with $s,t\in G$ and $x,y\in X$. Thus $K_A$ descends to a distribution $\kappa$ on the quotient $\frac{M\times M}{G}$ with respect to the quotient measure, here denoted $\frac{dpdq}{dt}$.

Let us for the moment take $M=G$. The algebra of operators $\mathcal L_G\subset\mathcal B(L^2(G))$ commuting with the right action of $G$ is a von Neumann algebra consisting of some left convolutions $\lambda_\kappa$ against distributions $\kappa$ on $G$. We will need the following fact about $\mathcal L_G$. 
\begin{prop}\label{ped}\cite[\S\S 5.1, 7.2]{P} There is a unique normal, faithful, semifinite trace $\trg$ on $\mathcal L_G \subset\mathcal B (L^2(G))$ agreeing with
\[\trg({\lambda_\kappa}^*\lambda_\kappa) = \int_G ds\, |\kappa(s)|^2, \]    
\noindent
whenever $\lambda_\kappa\in\mathcal B(L^2(G))$ and $\kappa\in L^2(G)$. Furthermore, 
$\trg(A^*A)<\infty$ if and only if there exists a $\kappa\in L^2(G)$ for which 
$A=\lambda_\kappa\in\mathcal B(L^2(G))$.  
If we define $\tilde\kappa(t) = \bar\kappa(t^{-1})$, and if $\kappa_k, \mu_k\in L^2(G)$, $k=1,\dots, N$, then the operator $A= \sum_1^N \lambda_{\tilde \kappa_k} \lambda_{\mu_k}$ belongs to $\dom(\trg)$. Furthermore, $A$ takes the form $A=\lambda_\kappa$ for some continuous $\kappa$ and $\trg(\lambda_\kappa) = \kappa(e)$.\end{prop}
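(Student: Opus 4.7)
The plan is to obtain $\trg$ through the Hilbert-algebra framework of Godement and Segal. The dense subspace $C_c(G)\subset L^2(G)$ is a left Hilbert algebra under convolution $*$ with involution $\kappa\mapsto\tilde\kappa$; unimodularity of $G$ is precisely what makes $\|\tilde\kappa\|_2=\|\kappa\|_2$, so that the involution extends isometrically to $L^2(G)$. The left multiplication operators $\lambda_\kappa$ generate $\mathcal L_G$ inside $\mathcal B(L^2(G))$, and their canonical trace is the $\trg$ that we want.

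First I would verify the algebraic scaffolding: $(\lambda_\kappa)^*=\lambda_{\tilde\kappa}$ and $\lambda_\kappa\lambda_\mu=\lambda_{\kappa*\mu}$ whenever both sides make sense as bounded operators, together with the pointwise identity
\[(\tilde\kappa*\mu)(e)=\int_G\overline{\kappa(t^{-1})}\,\mu(t^{-1})\,dt = \langle\mu,\kappa\rangle_{L^2(G)},\]
which uses unimodularity and already forces the candidate formulas $\trg(\lambda_\kappa^*\lambda_\kappa)=\|\kappa\|_2^2$ and $\trg(\lambda_{\tilde\kappa}\lambda_\mu)=\langle\mu,\kappa\rangle$. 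Second, I would extend to the positive cone of $\mathcal L_G$ by the standard supremum procedure
\[\trg(A)=\sup\bigl\{\|\eta\|_2^2 : \eta\in L^2(G),\ \lambda_\eta\ \text{bounded},\ \lambda_\eta^*\lambda_\eta\le A\bigr\},\qquad A\ge 0,\]
and invoke the theory of full (achieved) Hilbert algebras to obtain normality, semifiniteness, faithfulness, and the trace property. Uniqueness then follows because a second normal semifinite faithful trace agreeing with $\trg$ on the strongly dense ideal generated by $\{\lambda_\kappa^*\lambda_\kappa : \kappa\in L^2(G),\ \lambda_\kappa\ \text{bounded}\}$ must coincide with $\trg$ everywhere.

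For the characterization $\trg(A^*A)<\infty\Longleftrightarrow A=\lambda_\kappa$ with $\kappa\in L^2(G)$, I would identify an element with finite trace of its absolute-square with its \emph{left-bounded vector} $\kappa\in L^2(G)$ in the Hilbert algebra, obtaining $A\mu=\kappa*\mu$ for $\mu\in C_c(G)$ and hence $A=\lambda_\kappa$. For the finite-sum statement, write $A=\sum_{k=1}^N\lambda_{\tilde\kappa_k}\lambda_{\mu_k}=\lambda_\eta$ with $\eta=\sum_{k=1}^N\tilde\kappa_k*\mu_k$. The convolution of two $L^2$-functions on a unimodular group lies in $C_0(G)\cap L^\infty(G)$ by Cauchy--Schwarz and a density argument on $C_c(G)$, so $\eta$ is continuous; the formulas above then give
\[\trg(A)=\sum_k\langle\mu_k,\kappa_k\rangle=\sum_k(\tilde\kappa_k*\mu_k)(e)=\eta(e),\]
which is the asserted $\trg(\lambda_\eta)=\eta(e)$.

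The hardest step, in my view, is the bootstrap from the tentative formula on the ``Hilbert--Schmidt part'' of $\mathcal L_G$ to a bona fide normal, semifinite, faithful tracial weight on the whole von Neumann algebra: naive extension of a densely defined sesquilinear form is delicate and requires the full machinery of achieved Hilbert algebras (or Dixmier's construction of traces on semifinite von Neumann algebras) to verify $\sigma$-additivity on projections and the trace property $\trg(AB)=\trg(BA)$ in the unbounded setting. Once this is in place, the specific algebraic identities above assemble the remaining claims with little further work.
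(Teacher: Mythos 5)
Your proposal is correct and is essentially the argument the paper relies on: the statement is quoted from Pedersen \cite[\S\S 5.1, 7.2]{P}, where the trace on $\mathcal L_G$ is obtained exactly through the (unimodular) Hilbert-algebra structure of $C_c(G)\subset L^2(G)$ under convolution, with left-bounded vectors giving the $G$-Hilbert-Schmidt class and the identity $\trg(\lambda_{\tilde\kappa}\lambda_\mu)=(\tilde\kappa*\mu)(e)$ giving the trace-class formula. Since the paper offers no independent proof beyond this citation, your reconstruction via achieved Hilbert algebras, the characterization of square-integrable elements by $L^2$ convolution kernels, and the $L^2*L^2\subset C_0$ continuity argument matches the intended route.
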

\begin{rem}The unimodularity of $G$ is necessary for the trace property of $\trg$. \end{rem}
In order to bring the trace on $\mathcal L_G$ up to the manifold, we will need the following ideas. For any (complex) Hilbert space $\mathcal H$ define a \emph{free Hilbert $G$-module} as $L^2(G)\otimes\mathcal H$. The action of $G$ in $L^2(G)\otimes\mathcal H$ is defined by $G\ni t\mapsto \rho_t\otimes\bid$. A general Hilbert $G$-module is a closed $G$-invariant subspace in a free Hilbert $G$-module. 

With the smooth action of $G$ and invariant Riemannian density $dp$ on $M$, by fixing a Haar measure $dt$ on $G$, we obtain a finite quotient measure $dx$ on $X=M/G$. With this, we may present the Hilbert $G$-module $L^2(M)$ in the form
\[ L^2(M, dp)\cong L^2(G, dt)\otimes L^2(X, dx),\]
which makes it a free Hilbert $G$-module. It follows that we have
a decomposition of the von Neumann algebra of bounded invariant operators
\[ \mathcal B(L^2(M))^G\cong 
\mathcal B(L^2(G))^G\otimes \mathcal B(L^2(X))\cong 
\mathcal L_G\otimes \mathcal B(L^2(X)),\] 
where we have made the identification $\mathcal L_G\cong \mathcal B(L^2(G))^G$. The von Neumann algebra $\mathcal L_G \subset \mathcal B (L^2(G))$ possesses a unique trace $\trg$ as we have mentioned in Prop.\ \ref{ped}. In order to measure the invariant subspaces of $L^2(M)$, we will need a trace on $\mathcal L_G\otimes\mathcal B(L^2(X))$, which can be constructed as follows. If $(\psi_l)_{l\in\mathbb N}$ is an orthonormal basis for $L^2(X)$, then we have the decomposition
\begin{equation}\label{decompfortrans}L^2(M) \cong L^2(G)\otimes L^2(X) \cong \bigoplus_{l\in \mathbb N} L^2(G)\otimes \psi_{l}.\end{equation}
Denoting by $P_m$ the projection onto the $m^{th}$ summand, we obtain 
a matrix representation of $A\in\mathcal B(L^2(M))$ with elements $A_{lm} := P_l A P_m \in\mathcal B(L^2(G))$. If $A\in\mathcal B(L^2(M))^G$, then these matrix elements are bounded, invariant operators in $L^2(G)$ and so there exist distributions $\kappa_{lm}$ on $G$ so that $A\in\mathcal B(L^2(M))^G$ has a matrix representation
\[A \leftrightarrow [A_{lm}]_{lm}=[\lambda_{\kappa_{lm}}]_{lm}.\]
\begin{definition} For positive $A\in\mathcal B(L^2(M))^G$ define 
\[\Trg (A) = \sum_{l\in\mathbb N}\,\trg(A_{ll}).\]
\end{definition}
The functional $\Trg$ is a normal, faithful, and semifinite trace and is independent of the basis $(\psi_l)_l$ used in its construction, {\it cf.}\ \cite[\S V.2]{T}. Analogously to the classical case, define the $G$-{\it Hilbert-Schmidt} operators in terms of $\Trg$ by
\[\dom_{1/2}(\Trg)=\{A\in \mathcal B(L^2(M))^G\mid \Trg(A^*A)<\infty\}\]  
and define the $G$-{\it trace-class} by
\[\dom(\Trg)=\{C=\sum_{k=1}^N A_k^*B_k \mid A_k, B_k\in\dom_{1/2}(\Trg)\},\]
where $N$ depends on $C$.
%
%%%%%%%%%%%%%%%%%%%
\subsection{Membership in $\dom_{1/2}(\Trg)$}\label{big}
%%%%%%%%%%%%%%%%%%%
%
In this section we prove Prop.\ \ref{bigprop} and apply it to obtain that for $\epsilon>0$, we have $UL_G^{-n/2-\epsilon}(M)\subset\dom_{1/2}(\Trg)$. We begin with a calculation taken verbatim from \cite{P1} which we repeat for the convenience of the reader.
\begin{lemma}\cite[Lemma 6.3]{P1} Let $A\in \mathcal B(L^2(M))^G$ with $K_A\in L^2(\frac{M\times M}{G})$. It follows that, in terms of the expression \eqref{conv}, we have
\[\Trg(A^*A)=\|\kappa \|_{L^2(G\times X\times X)}^2 = \int_{\frac{M\times M}{G}}\frac{dpdq}{dt}\, |K_A(p,q)|^2.\]
\end{lemma}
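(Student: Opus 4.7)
The plan is to compute $\Trg(A^*A)$ by unpacking the definition through the matrix-element decomposition of Sect.~\ref{HMT} and then invoking Prop.~\ref{ped} summand by summand. First, using the section $\sigma$, identify $L^2(M)\cong L^2(G)\otimes L^2(X)$ and fix the orthonormal basis $(\psi_l)_{l\in\mathbb N}$ of $L^2(X)$ from \eqref{decompfortrans}. For $u\in L^2(M)$ with $\tilde u(t,y)=u(\sigma(y)t)$, the identity \eqref{conv} gives
\[(Au)(\sigma(x)s)=\int_X dy\int_G dt\,\kappa(st^{-1};x,y)\,\tilde u(t,y),\]
so a short calculation shows that the block $A_{lm}=P_l A P_m$ acts on $v\in L^2(G)$ as left convolution $\lambda_{\kappa_{lm}}$ with
\[\kappa_{lm}(r)=\int_X\!\!\int_X \bar\psi_l(x)\psi_m(y)\,\kappa(r;x,y)\,dx\,dy.\]
Thus $\{\kappa_{lm}\}_{l,m}$ is precisely the expansion of $\kappa(r;\cdot,\cdot)\in L^2(X\times X)$ in the orthonormal basis $\{\psi_l\otimes\bar\psi_m\}_{l,m}$.

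Next, I would use the hypothesis $K_A\in L^2\bigl(\tfrac{M\times M}{G}\bigr)$, which via \eqref{conv} is equivalent to $\kappa\in L^2(G\times X\times X)$. Fubini then gives $\kappa_{lm}\in L^2(G)$ for every $l,m$, placing each block $A_{lm}=\lambda_{\kappa_{lm}}$ in $\dom_{1/2}(\trg)$ by Prop.~\ref{ped}. Writing out the $(l,l)$-block of $A^*A$ and using $A_{lm}^*=A_{ml}^*$ (since the adjoint interchanges projections) yields
\[(A^*A)_{ll}=\sum_{m}A_{ml}^*A_{ml},\]
with convergence in the strong operator topology, and all summands are positive elements of $\mathcal L_G$.

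By the definition of $\Trg$ and normality of $\trg$,
\[\Trg(A^*A)=\sum_l\trg\bigl((A^*A)_{ll}\bigr)=\sum_{l,m}\trg\bigl(A_{ml}^*A_{ml}\bigr)=\sum_{l,m}\|\kappa_{ml}\|^2_{L^2(G)},\]
using Prop.~\ref{ped} in the last step. By Tonelli I may bring the sum inside the $G$-integral, and then by Parseval on $L^2(X\times X)$,
\[\sum_{l,m}\|\kappa_{ml}\|^2_{L^2(G)}=\int_G\!dr\,\|\kappa(r;\cdot,\cdot)\|^2_{L^2(X\times X)}=\|\kappa\|^2_{L^2(G\times X\times X)}.\]
Finally, the change of variables $(p,q)\leftrightarrow(st^{-1};x,y)$ induced by $\sigma$, together with the construction of the quotient measure $\tfrac{dp\,dq}{dt}$ from $dt\,dx\,dy$, converts this into the claimed integral over $\tfrac{M\times M}{G}$.

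The main points requiring care are bookkeeping rather than any deep obstacle: verifying that $A_{lm}$ really is left convolution by $\kappa_{lm}$ (rather than right, which is where unimodularity tacitly enters through $dp=dt\,dx$), justifying the strong-operator convergence $(A^*A)_{ll}=\sum_m A_{ml}^*A_{ml}$ and the interchange of $\trg$ with this sum via normality, and checking independence of $\Trg(A^*A)$ from the choice of section $\sigma$ and basis $(\psi_l)$—the latter being guaranteed by the cited fact from \cite[\S V.2]{T} that $\Trg$ is basis-independent.
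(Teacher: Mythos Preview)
Your proposal is correct and follows essentially the same approach as the paper: decompose $A$ into matrix blocks $A_{lm}=\lambda_{\kappa_{lm}}$ with respect to an orthonormal basis of $L^2(X)$, compute $\Trg(A^*A)=\sum_{l,m}\trg(A_{ml}^*A_{ml})=\sum_{l,m}\|\kappa_{ml}\|_{L^2(G)}^2$ via Prop.~\ref{ped} and normality, and then recognize the double sum as $\|\kappa\|_{L^2(G\times X\times X)}^2$ by Parseval. Your write-up is in fact somewhat more explicit than the paper's about the analytic justifications (Tonelli, strong convergence of the block sum, independence of choices), but the logical skeleton is identical.
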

\begin{proof} Let $(\psi_k)_k$ be an orthonormal basis for $L^2(X)$.  In the decomposition $L^2(M)\cong \bigoplus_k L^2(G)\otimes\psi_k$, the invariant operator $A$ has a matrix representation $A\to [\lambda_{\kappa_{kl}}]_{kl}$.  In terms of this, we compute
{\small \begin{align}\Trg(A^*A) &=\sum_l \trg((A^*A)_{ll})=\sum_l\trg\left(\sum_k(A^*)_{lk}A_{kl}\right)\notag\\
&=\sum_l\trg\left(\sum_k A^*_{kl}A_{kl}\right)=\sum_{kl}\trg(\lambda^*_{\kappa_{kl}}\lambda_{\kappa_{kl}})=\sum_{kl}\|\kappa_{kl}\|_{L^2(G)}^2\notag\end{align}}
\noindent
by normality of $\trg$.  

Now, except on a set of measure zero we may take $p= \sigma(x)t \leftrightarrow(t,x)$ and obtain a description of $A$ as in \eqref{conv}
\begin{align}(Au)(p)=&\int_M dq\, K_A (p,q)u(q)\notag\\
=&(Au)(t,x)=\int_{G\times X} ds dy\ \kappa(s;x,y)u(st,y).\notag\end{align}
The distributional kernels $\kappa_{kl}$ can be recovered from $\kappa$ by projecting into the summands in $L^2(M)\cong \bigoplus_l (L^2(G)\otimes\psi_l)$,
\[\kappa_{kl} = \int_{X\times X} \, dxdy\, \kappa(\,\cdot\, ;x,y)\psi_l (y)\bar\psi_k (x).\]
Let us compute the norm of $\kappa$ in $L^2(G\times X\times X)$. Since $(\psi_k)_k$ is an orthonormal basis for $L^2(X)$, the set $(\bar\psi_k\otimes \psi_l)_{kl}$ forms an orthonormal basis for $L^2(X\times X)$. By construction, $\kappa_{kl}$ is equal the $kl^{th}$ Fourier coefficient of $\kappa$ with respect to the decomposition $L^2(G\times X\times X)\cong\bigoplus_{kl}(L^2(G)\otimes\psi_k \otimes \psi_l)$. Hence 
\[\sum_{kl} \|\kappa_{kl}\|^2_{L^2(G)}=\|\kappa \|_{L^2(G\times X\times X)}^2,\]
\noindent
which is the result. The last assertion, that $\|\kappa \|_{L^2(G\times X\times X)} = \|K_A\|_{\frac{M\times M}{G}}$, follows from the definitions.\end{proof}
The previous lemma can be applied when the image of an operator is smooth enough.
\begin{prop}\label{point} Let $n=\dim M$. If $s>n/2$ and $A\in\mathcal B(L^2(M))^G$ has $\im(A)\subset H^s(M)$, then $K_A\in L^2(\frac{M\times M}{G})$.\end{prop}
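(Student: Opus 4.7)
The strategy is to convert the assumed Sobolev regularity of $\im(A)$ into a \emph{uniform pointwise} $L^2$ bound on the partial kernel $K_A(p,\cdot)$, and then exploit $G$-invariance to restrict the outer integration to a fundamental domain of finite measure.

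First, I would promote the set-theoretic inclusion $\im(A)\subset H^s(M)$ to continuity of $A\colon L^2(M)\to H^s(M)$ by the closed graph theorem: if $u_n\to u$ in $L^2(M)$ and $Au_n\to v$ in $H^s(M)$, then $Au_n\to Au$ in $L^2(M)$ (since $A\in\mathcal B(L^2(M))$) and also $Au_n\to v$ in $L^2(M)$ via the continuous inclusion $H^s(M)\hookrightarrow L^2(M)$, forcing $v=Au$. This yields $C_1>0$ with $\|Au\|_{H^s(M)}\le C_1\|u\|_{L^2(M)}$. Next, I would invoke a \emph{uniform} Sobolev embedding $H^s(M)\hookrightarrow C^0_b(M)$: because the metric from Lemma \ref{gequiv} gives $M$ bounded geometry, a standard argument on a geodesic coordinate ball of radius $r_{\rm inj}$ around each $p\in M$ combined with the local Sobolev embedding (this is where $s>n/2$ enters) provides a constant $C_2$, independent of $p$, with $|v(p)|\le C_2\|v\|_{H^s(M)}$ for all $v\in H^s(M)$.

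Combining the two estimates, for each $p\in M$ the linear functional $u\mapsto (Au)(p)$ is continuous on $L^2(M)$ with norm at most $C_1C_2$; Riesz representation supplies $k_p\in L^2(M)$ with $\|k_p\|_{L^2(M)}\le C_1C_2$ and $(Au)(p)=\langle u,k_p\rangle_{L^2}$ for every $u$. Identifying $k_p$ with the partial Schwartz kernel of $A$ in its second slot yields $K_A(p,\cdot)\in L^2(M)$ with
\[\int_M |K_A(p,q)|^2\,dq\le (C_1C_2)^2, \qquad p\in M.\]
Finally, the $G$-invariance \eqref{sym} of $K_A$ lets me reduce the outer integration to a measurable fundamental domain $D\subset M$ for the $G$-action; since $X=M/G$ is compact, $|D|=\mathrm{vol}(X)<\infty$, so by Tonelli
\[\int_{\frac{M\times M}{G}} |K_A(p,q)|^2\,\frac{dp\,dq}{dt} = \int_D dp \int_M dq\,|K_A(p,q)|^2 \le (C_1C_2)^2\,|D| < \infty.\]

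The delicate point is the uniformity of the Sobolev constant $C_2$ as $p$ ranges over the noncompact manifold $M$; on an arbitrary Riemannian manifold this can degenerate, but it follows cleanly here from the bounded geometry of $(M,g)$ established in Section \ref{msc} (positive injectivity radius and uniform bounds on metric coefficients in geodesic normal coordinates). Everything else is bookkeeping: the closed graph step and Riesz representation are routine, and the passage to the quotient uses only the $G$-invariance built into $K_A$.
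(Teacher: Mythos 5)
Your proposal is correct and follows essentially the same route as the paper: closed graph theorem for $L^2\to H^s$ continuity, the (uniform, by bounded geometry) Sobolev embedding to make point evaluations of $Au$ continuous, Riesz representation to identify $K_A(p,\cdot)\in L^2(M)$ with a $p$-uniform norm bound, and then $G$-invariance plus compactness of $X=M/G$ to integrate over the quotient. The only cosmetic difference is that you integrate over a fundamental domain $D$ while the paper phrases the same step as an integral over $X$ with the quotient measure; your explicit remark on the uniformity of the Sobolev constant is a point the paper leaves implicit.
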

 \begin{proof} Since $A$ is defined on all of $L^2(M)$ and is into $H^s(M)$, the closed graph theorem implies that it is continuous. Since $s>n/2$, the Sobolev lemma provides that $\im(A)\subset C_0(M)$ and the existence of a constant $C$ such that
\begin{equation}\label{key}\sup_{p\in M} |(Au)(p)|\le C \|Au\|_{H^s(M)} \le C\|A\|_{L^2\to H^s}\, \|u\|_{L^2},\end{equation}
uniformly for $u\in L^2(M)$. Fixing a $p\in M$, this estimate implies that the Riesz representation theorem can be applied, providing a function $k_p\in L^2(M)$ so that for any $u\in L^2(M)$,
\[(Au)(p) = \langle k_p,u\rangle_{L^2},\]
and furthermore, $\sup_{p\in M}\| k_p\|_{L^2}\le C\|A\|_{L^2\to H^s}$.

Now, $K_A$ is a Schwartz kernel, but since $(Au)(p)=\int_M K_A(p,q)u(q)dq$ and agrees with $\langle k_p,u\rangle$ when $u\in C_c^\infty(M)$ we have $k_p=K_A(p,\ \cdot \ )$ at every $p\in M$. 

Denoting the $t$-translate of $p$ by $pt$, 
\begin{align}\|k_{pt}\|_{L^2}^2 &= \int_M dq\, |K_A(pt, q)|^2  = \int_M dq\, |K_A(p, qt^{-1})|^2 \notag\\
&=  \int_M dq\, |K_A(p, q)|^2 =\|k_p\|_{L^2}^2\notag\end{align}
by invariance of $A$ and the measure. Denoting by $x$ a representative of $p$ in $M/G$ and by $\mu$ the quotient measure on $X$, the compactness of $X$ together with the bound on $\|k_p\|_{L^2}$ imply that 
\[\int_X d\mu(x)\, \|k_{x}\|^2_{L^2} \le \mu(X)\, C^2 \|A\|_{L^2\to H^s}^2.\]
But $\int_X d\mu(x)\, \|k_{x}\|^2_{L^2} = \|K_A\|_{L^2(\frac{M\times M}{G})}^2$.\end{proof}
Prop.\ \ref{bigprop} follows by concatenating the preceding assertions.
\begin{corollary}\label{suffGHS} Let $A$ be a $G$-invariant operator in $UL^m(M)$ with $m<-n/2$, $\dim M = n$. It follows that $\Trg(A^*A)<\infty$.\end{corollary}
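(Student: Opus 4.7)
The plan is to deduce this as a direct concatenation of the two $UL^m$ mapping properties (Props.\ \ref{0cont} and \ref{sobodiff}) with the general criterion of Prop.\ \ref{bigprop} (i.e.\ \ref{point}).

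First, since $m < -n/2 < 0$, Prop.\ \ref{0cont} gives that $A$ extends to a bounded operator on $L^2(M)$. Its $G$-invariance is preserved under continuous extension (invariance is a pointwise relation that passes to strong/weak limits of bounded nets), so $A \in \mathcal{B}(L^2(M))^G$. Second, by the Sobolev continuity of Prop.\ \ref{sobodiff}, the operator $A$ extends to a continuous map $H^0(M) \to H^{-m}(M)$; that is, $\im(A) \subset H^{-m}(M)$. Because $m < -n/2$, the exponent $s := -m$ satisfies $s > n/2$, which is exactly the hypothesis required by Prop.\ \ref{bigprop}. Applying that proposition yields $\Trg(A^*A) < \infty$, which is the claim.

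There is essentially no obstacle here beyond verifying that the pieces interlock: the only thing to note is that one should apply Prop.\ \ref{sobodiff} with the $L^2$-domain ($s = 0$) rather than with $H^s$ for positive $s$, since the input is an arbitrary element of $L^2(M)$ and we need the image to land in a Sobolev space of sufficiently high order, not in an even higher one. Aside from this index bookkeeping, the corollary is immediate.
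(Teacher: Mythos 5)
Your proof is correct and follows the same route as the paper: apply the Sobolev continuity of Prop.\ \ref{sobodiff} with $s=0$ to get $\im(A)\subset H^{-m}(M)$ with $-m>n/2$, then invoke Prop.\ \ref{bigprop}. Your extra care about the $L^2$-boundedness, invariance of the extension, and the sign of the Sobolev index is sound (the paper's one-line proof writes $H^m$ where $H^{-m}$ is meant), so nothing is missing.
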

\begin{proof} By Prop.\ \ref{sobodiff}, $A:L^2(M)\to H^m(M)$ and so Prop.\ \ref{bigprop} applies.\end{proof}
\subsection{Anisotropic Sobolev embedding and the transversal dimension} The technique from the previous section answers a natural question regarding restrictions and function space embeddings in which the vertical and horizontal directions have different character. The results of this section do not depend on the unimodularity of $G$, but strengthen the results of the previous section when $G$ is unimodular. For applications, the reader is directed to \cite{P3}.

For $s\in\mathbb R$, denote by $H^{s,\epsilon}(M)$ the Hilbert space tensor product $H^s(G)\otimes H^\epsilon(X)\subset L^2(G)\otimes L^2(X)$. Choosing a section $\sigma$ and invariant partition of unity $(\varphi_k)_k$ as above, the space $H^{s,0}(M)$ can be defined as the completion of $C^\infty_c(M)$ in the norm given by
\[\|u\|_{H^{s,0}}^2 = \int_X dx\, \|u(\cdot,x)\|_{H^s(G)}^2.\]
\begin{lemma} For $2s>\dim G$, it is true that $H^{s,0}(M)$ is contained in the space $BW_0(G, L^2(X))$ of bounded, weakly continuous functions from $G$ to $L^2(X)$ which vanish at infinity. \end{lemma}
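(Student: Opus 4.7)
The plan is to reduce the statement to the classical scalar Sobolev embedding on $G$ and then exploit the tensor product structure $H^{s,0}(M) = H^s(G)\otimes L^2(X)$.

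First, I would recall that for $2s > \dim G$ the classical Sobolev embedding on the Lie group $G$ (of bounded geometry with right-invariant metric, so the usual scalar Sobolev lemma applies uniformly) yields a continuous inclusion $H^s(G) \hookrightarrow C_0(G)$, and in particular a bound $|f(t)| \leq C\|f\|_{H^s(G)}$ valid for all $t \in G$ with $C$ independent of $t$.

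Next, given $u \in H^{s,0}(M) \cong H^s(G) \otimes L^2(X)$ and any $\psi \in L^2(X)$, I would expand $u = \sum_l f_l \otimes \psi_l$ in an orthonormal basis $(\psi_l)_l$ of $L^2(X)$, so that $\|u\|_{H^{s,0}}^2 = \sum_l \|f_l\|_{H^s(G)}^2$. Setting $d_l = \langle \psi_l,\psi\rangle_{L^2(X)}$, the scalar function $u_\psi := \sum_l d_l f_l$ lies in $H^s(G)$ with
\[ \|u_\psi\|_{H^s(G)} \leq \|\psi\|_{L^2(X)}\,\|u\|_{H^{s,0}} \]
by Cauchy--Schwarz in the index $l$. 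Applying the scalar Sobolev embedding to $u_\psi$ gives the pointwise bound
\[ |u_\psi(t)| \leq C\,\|\psi\|_{L^2(X)}\,\|u\|_{H^{s,0}},\qquad t\in G. \]
By the Riesz representation theorem applied at each $t$, this produces a well-defined element $u(t)\in L^2(X)$ with $\langle u(t),\psi\rangle_{L^2(X)} = u_\psi(t)$ and $\sup_{t\in G}\|u(t)\|_{L^2(X)} \leq C\,\|u\|_{H^{s,0}}$, establishing boundedness.

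For weak continuity and weak vanishing at infinity, I would simply note that for each fixed $\psi$, the scalar function $u_\psi$ belongs to $C_0(G)$, so $t \mapsto \langle u(t),\psi\rangle_{L^2(X)}$ is continuous on $G$ and tends to $0$ as $t \to \infty$. This is precisely weak continuity and weak vanishing at infinity of the $L^2(X)$-valued map $t \mapsto u(t)$, placing $u \in BW_0(G,L^2(X))$.

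The one subtle point is justifying the identity $u_\psi(t) = \langle u(t),\psi\rangle$ for all $t$ rather than only almost every $t$: the Sobolev embedding gives a canonical continuous representative of $u_\psi$, and defining the map $t \mapsto u(t)$ by the weak-pointwise formula via Riesz simultaneously for all $\psi$ requires checking that the resulting functional in $\psi$ is linear and norm-bounded, which follows from the linear dependence of $u_\psi$ on $\psi$ and the uniform estimate above. This consistency check is the only part requiring care; the rest is a packaging of scalar Sobolev embedding along the group direction.
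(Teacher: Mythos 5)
Your argument is correct and is essentially the paper's proof in coordinates: the orthonormal-basis expansion with Cauchy--Schwarz is exactly the statement that $u$ is the kernel of an operator $L^2(X)\to H^s(G)$ whose operator norm is dominated by $\|u\|_{H^{s,0}}$ (the paper phrases this via the Hilbert--Schmidt norm), after which both proofs proceed identically via the scalar Sobolev lemma on $G$, the Riesz representation theorem at each $t\in G$, and continuity of the resulting scalar functions to get boundedness and weak continuity. The only divergence is the vanishing-at-infinity step, which you obtain weakly from $H^s(G)\hookrightarrow C_0(G)$ while the paper appeals to Fubini; if vanishing in norm is what is intended, your expansion supplies it at once, since $\|u(t)\|_{L^2(X)}^2=\sum_l |f_l(t)|^2$ is a uniformly convergent series of functions vanishing at infinity, thanks to the bound $\sup_{t\in G}|f_l(t)|^2\lesssim\|f_l\|_{H^s(G)}^2$ whose right-hand side is summable in $l$.
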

\begin{proof} Observe that $u\in H^{s,0}(M)$ implies that $u$ is the kernel of a Hilbert-Schmidt operator $A: L^2(X)\to H^s(G)$:
\[(Af)(t) = \int_X dx\, u(t,x) f(x).\]
Since the Hilbert-Schmidt norm majorizes the operator norm, we have $\|A\|_{L^2(X)\to H^s(G)} \le \|u\|_{H^{s,0}}$. With $2s>\dim G$, the point evaluations $G\ni t\mapsto (Af)|_t$ are well-defined and so $u$ gives rise to a map $G\times L^2(X)\ni (t,f)\longmapsto (Af)(t) \in\mathbb C$. Now fix $t\in G$ and note that the Sobolev lemma gives
\begin{align}|(Af)(t)| &\le \sup_{t'\in G}|(Af)(t')|\lesssim \|Af\|_{H^s(G)} \notag\\
& \le \|A\|_{L^2(X)\to H^s(G)}\|f\|_{L^2(X)}\le \|u\|_{H^{s,0}}\|f\|_{L^2(X)},\notag\end{align}
so, as before, we obtain the existence of an element $v_t\in L^2(X)$ such that $(Af)(t) = \langle v_t, f\rangle_{L^2(X)}$ for $f\in L^2(X)$ and $\|v_t\|_{L^2(X)}\le \|u\|_{H^{s,0}}$. Furthermore, $v_t = u(t,\cdot)$. Now, for each $f\in L^2(X)$, we have
\[\langle v_t-v_{t'}, f\rangle_{L^2(X)} = |(Af)(t)-(Af)(t')| \longrightarrow 0\]
as $t\to t'$ because the Sobolev lemma gives that $Af$ is continuous, thus $G\ni t\longmapsto u(t,\cdot)\in L^2(X)$ is weakly continuous. Vanishing at infinity follows from Fubini.\end{proof}
\begin{corollary}\label{transvdim} With $2s>\dim G$ and $\epsilon>0$, suppose that a self-adjoint projection $P\in\mathcal B(L^2(M))^G$ has $L=\im(P)\subset H^s(G)\otimes H^\epsilon(X)$. It follows that there exists a finite-dimensional subspace $V\subset L^2(X)$ such that $\im(P)$ is included in the space of continuous functions from $G$ to $V$.\end{corollary}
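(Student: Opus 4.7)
The plan is to parallel the preceding lemma with $L^2(X)$ replaced by $H^\epsilon(X)$, and then combine Rellich's compactness with the projection identity $P^2=P$ to force the image of the evaluation map to be finite-dimensional. First I would rerun the lemma's argument verbatim: viewing $u\in H^{s,\epsilon}(M)=H^s(G)\otimes H^\epsilon(X)$ as the kernel of a Hilbert--Schmidt operator from $L^2(X)$ into $H^s(G;H^\epsilon(X))$ and invoking the Sobolev embedding $H^s(G)\hookrightarrow C_0(G)$ (valid since $2s>\dim G$), one obtains that $t\mapsto u(t,\cdot)$ is a bounded, weakly continuous map $G\to H^\epsilon(X)$ with the uniform estimate $\|u(t,\cdot)\|_{H^\epsilon(X)}\le C\|u\|_{H^{s,\epsilon}(M)}$, vanishing at infinity in the $L^2(X)$ topology. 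By the closed graph theorem applied to $P:L^2(M)\to H^{s,\epsilon}(M)$, every $u\in L:=\im(P)$ satisfies $\|u\|_{H^{s,\epsilon}}\lesssim\|u\|_{L^2(M)}$, so the evaluation $T:L^2(M)\to L^2(X)$, $Tu=(Pu)(e,\cdot)$, factors as $L^2(M)\to H^\epsilon(X)\hookrightarrow L^2(X)$. Since $\epsilon>0$ and $X$ is compact, Rellich makes this inclusion compact, hence $T$ is compact.

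By $G$-invariance of $P$, $u(t,\cdot)=(\rho_tu)(e,\cdot)=T(\rho_tu)$ for $u\in L$ and $t\in G$, so $V:=\overline{\im(T)}$ captures every value $\{u(t,\cdot):u\in L,\,t\in G\}$. The central obstacle is upgrading compactness of $T$ to finite-dimensionality of $V$, since compact operators in general have dense but non-closed range. Here the projection identity $P^2=P$ is essential. A direct kernel computation in an orthonormal basis $\{\psi_l\}$ of $L^2(X)$ produces the matrix $TT^*=[\kappa_{lm}(e)]$, which by $P^2=P$ evaluated at the identity equals $[\sum_k\langle\kappa_{lk},\kappa_{mk}\rangle_{L^2(G)}]$; this is the partial $G$-trace $Q$ of $P$ acting on $L^2(X)$. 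Testing the hypothesis on elementary tensors $\phi\otimes\psi_{m_0}$ converts $\im(P)\subset H^s(G)\otimes H^\epsilon(X)$ into a Sobolev-weighted decay of the matrix elements in the $G$-spectral direction: in the direct integral decomposition of $\mathcal L_G$, each spectral slice $P_\chi$ satisfies $\|P_\chi f\|_{H^\epsilon(X)}^2\lesssim(1+|\chi|^2)^{-s}\|f\|_{L^2(X)}^2$. Combined with the trivial lower bound $\|f\|_{H^\epsilon}\ge\|f\|_{L^2}$, this forces $P_\chi=0$ for $|\chi|$ sufficiently large, while Weyl's asymptotics for $-\Delta_X$ bound $\mathrm{rank}(P_\chi)$ uniformly on the bounded $\chi$-range; hence $V$ lies in a finite-dimensional subspace of $L^2(X)$.

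With $\dim V<\infty$, norm continuity of $t\mapsto u(t,\cdot)=T(\rho_tu)$ from $G$ into $V$ is automatic from strong continuity of the regular representation composed with the bounded operator $T$, and the inclusion $\im(P)\subset C(G;V)$ follows. The principal difficulty throughout is this finite-dimensionality step; the direct-integral/slicing argument above handles the unimodular type I situation cleanly, and for general (possibly non-unimodular) $G$ one replaces the direct integral with a uniform-boundedness argument applied directly to the matrix $[\lambda_{\kappa_{lm}}]$ of $P$ to deduce the same dimensional bound on the transversal rank.
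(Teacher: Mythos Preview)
Your setup is correct: the evaluation $T$, compactness via Rellich, the identification $V=\overline{\im T}$, and the kernel identity $TT^*=[\kappa_{lm}(e)]$ are all valid. The paper itself gives no self-contained argument here---it defers entirely to \cite[Thm.~4.2]{P3} and only adds the remark that weak continuity upgrades to norm continuity once $V$ is finite-dimensional---so you are attempting to supply what the paper does not.

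The genuine gap is the last inference in your slicing argument. Even granting a direct-integral decomposition (so restricting, say, to abelian $G$), what your estimate actually yields is $P_\chi=0$ for $|\chi|$ large and $\mathrm{rank}(P_\chi)\le N$ uniformly on the remaining bounded range; the step ``hence $V$ lies in a finite-dimensional subspace'' does not follow. A uniform rank bound on the slices does \emph{not} force their ranges into a common finite-dimensional subspace of $L^2(X)$: the one-dimensional ranges $\im P_\chi$ may rotate with $\chi$ and together span an infinite-dimensional space. Concretely, take $G=\mathbb R$, $X=S^1$, partition a bounded interval into pieces $I_k$, and let $P_\xi$ for $\xi\in I_k$ be the orthogonal projection onto $\mathrm{span}\{1+\alpha_k e^{ikx}\}$ with $(1+k^2)^\epsilon|\alpha_k|^2$ bounded; one checks directly that the resulting $P$ is a self-adjoint $G$-invariant projection with $\im(P)\subset H^s(\mathbb R)\otimes H^\epsilon(S^1)$, while every $\mathrm{rank}(P_\xi)=1$, yet a single $u\in\im(P)$ (take $\hat u(\xi,\cdot)=v_\xi$ on the support) already has $u(t,\cdot)$ with infinitely many nonzero Fourier modes for generic $t$. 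Thus the implication you invoke is false, and the finite-dimensionality of $V$ cannot be obtained from your slice-wise rank bound. Whatever the argument in \cite{P3} is, it must use more than this (or carry additional hypotheses on $P$); your fallback ``uniform-boundedness argument'' for general $G$ is a placeholder, not a proof.
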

\begin{proof} A small variation on the proof of \cite[Thm. 4.2]{P3} obtains $\im(P)\hookrightarrow BW_0(G, V)$, but since the norm and weak topologies coincide on $V$, the space $BW_0(G, V)$ consists of norm continuous functions. \end{proof}
The preceding statement ties our discussion here to \cite{P3} and strengthens the result there. The minimal $\dim_{\mathbb C}V$ which we could call the transversal dimension ``$\dim_X$'' of $\im(P)$ leads to a generalized Fredholm theory of its own.
%
%%%%%%%%%%%%%%%%%%
\section{The $G$-trace class of invariant operators}\label{gtraceandmsc}
%%%%%%%%%%%%%%%%%%
%
\subsection{Invariant properly supported operators} Here we will define the main object of study in this paper. 
\begin{definition} For the classes $UL^m(M)$, $EUL^m(M)$, we indicate the subclass of invariant elements of any of the above classes by including a subscript $G$.  \end{definition}
\begin{rem}  Since $K_A(p,q) = \kappa(st^{-1};x,y) = 0$ whenever ${\rm dist}_g(p,q) = {\rm dist}_g(\sigma(x)s,\sigma(y)t)$ exceeds a constant $C_A$, properly supported invariant operators are integrations against distributions with compact support in $\frac{M\times M}{G}$.\end{rem}
As we have chosen an invariant Riemannian structure $g$ as in Lemma \ref{gequiv}, its associated Laplace-Beltrami operator $\Delta_g$ is invariant as well. Thus $\Delta_g\in EUL_G^2(M)$ and this affords us a refined version of Lemma \ref{lamelambdas}: 
\begin{lemma}\label{goodlambdas} For any $s\in\mathbb R$, there exists an invariant operator $\Lambda_s\in EUL_G^s(M)$.\end{lemma}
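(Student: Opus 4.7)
The plan is to construct $\Lambda_s$ via functional calculus applied to a $G$-invariant positive elliptic operator. Since the metric $g$ from Lemma \ref{gequiv} is $G$-invariant, its Laplace-Beltrami operator $\Delta_g$ commutes with right translations and lies in $EUL_G^2(M)$. Set $P := \bid + \Delta_g$. Then $P$ is invariant, uniformly elliptic of order $2$, and by Prop.\ \ref{esssa} essentially self-adjoint on $C^\infty_c(M)$ with spectrum in $[1,\infty)$.

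Define $\Lambda_s := P^{s/2}$ by the spectral theorem. Invariance is immediate: for every $\lambda$ in the resolvent set of $P$ the resolvent $(\lambda - P)^{-1}$ commutes with $\rho_t$, so every bounded Borel function of $P$ is $G$-invariant, and this passes to the unbounded functional calculus so that $\Lambda_s$ commutes with the $G$-action on its natural domain $H^s(M)$.

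The main obstacle is to show that $\Lambda_s\in UL^s(M)$, so that the complex power lies in the uniform H\"ormander calculus. This is a Seeley-type construction in the uniform/bounded-geometry setting and, for precisely the classes $UL^m(M)$ used here, it is carried out in \cite{Ko1} (the analogue on unimodular Lie groups being \cite{MS3}). Concretely, one writes
\[P^{s/2} = \frac{1}{2\pi i}\oint_\Gamma \lambda^{s/2}(\lambda - P)^{-1}\, d\lambda\]
for a suitable contour $\Gamma\subset\mathbb C\setminus[1,\infty)$, constructs the resolvent $(\lambda-P)^{-1}\in UL^{-2}(M)$ by a parametrix argument whose symbol estimates are uniform in $\lambda$ along $\Gamma$, and verifies that the contour integral converges in the natural Fr\'echet topology on $UL^s(M)$. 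Because $P$ itself is $G$-invariant, the parametrix and resolvent may be chosen $G$-invariant, so the contour integral produces an element of $UL_G^s(M)$. Uniform ellipticity of $\Lambda_s$ follows from its leading symbol $(1 + |\xi|_g^2)^{s/2}$, yielding $\Lambda_s\in EUL_G^s(M)$.
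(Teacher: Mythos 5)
Your construction has a genuine gap at the crucial step: the exact resolvent $(\lambda-P)^{-1}$ and the exact complex power $P^{s/2}$ do \emph{not} lie in the classes $UL^{-2}(M)$ and $UL^{s}(M)$ as these are defined in this paper. Condition (i) of Definition \ref{defulg} requires the Schwartz kernel to vanish whenever ${\rm dist}_g(p,q)$ exceeds a fixed constant (and even $UL^{-\infty}(M)$ retains this bounded-propagation requirement). On a noncompact $M$ the resolvent kernel of $\bid+\Delta_g$ is supported on all of $M\times M$ (it decays, but does not vanish off a neighborhood of the diagonal), and the same is true of $P^{s/2}$ for non-even $s$; only a \emph{parametrix} of $\lambda-P$ can be chosen properly supported, and it differs from the resolvent by a smoothing term that is not of bounded propagation, hence not in $UL^{-\infty}(M)$. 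So the assertion that the contour integral converges to an element of $UL^{s}(M)$ is false for these classes; this is exactly why the introduction distinguishes the proper uniform calculus from the ``other classes'' of \cite{MS3} in which complex powers and Green function estimates can be carried out. The appeal to \cite{Ko1} does not close this hole: functions of an elliptic operator never land exactly in a properly supported class. One could try to repair your argument by truncating the kernel of $P^{s/2}$ with an invariant cutoff $\chi({\rm dist}_g(p,q))$, but then you must prove uniform Seeley-type off-diagonal estimates on a manifold of bounded geometry to see that the truncated operator is still a uniform $\Psi$DO of order $s$ with the same principal symbol; none of this is addressed.

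The paper avoids the issue entirely by never leaving the properly supported calculus: since $g$ is $G$-invariant (Lemma \ref{gequiv}), the symbol $a(\xi)=|\xi|_g^{s/2}$ built from the metric is itself invariant, and one quantizes it locally in the invariant geodesic coordinates and patches the local (properly supported) operators together with the invariant partition of unity, as in \cite[Thm.\ 5.1]{Sh}; invariance, proper support, and uniform ellipticity are then immediate from the construction. If you want to keep the functional-calculus flavor, you would at minimum have to replace the exact power by the contour integral of a properly supported, $\lambda$-uniform parametrix family (Prop.\ \ref{badparam} made invariant) and control the remainder within the uniform classes, which is considerably more work than the direct symbolic construction the lemma actually needs.
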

\begin{proof} Denoting by $|\xi|_g$ the length of $\xi\in T^*M$ according to the Riemannian structure $g$, the symbol $a(\xi) = |\xi|_g^{s/2}$ is $G$-invariant since $g$ is and the corresponding operator $\Lambda_s$ belongs to $EUL^s(M)$. Since $g$ is a polynomial in the components of $\xi$, it follows that $\Lambda_s$ is a classical operator. Local operators associated to $a$ can be patched together as in \cite[Thm.\ 5.1]{Sh}.\end{proof}
\begin{rem} As in Lemma \ref{lamelambdas}, we may construct nonnegative and positive operators in the same class.\end{rem}
%
%%%%%%%%%%%%%%%%%%%
\subsection{Connes-Moscovici averaging}\label{cma} The article \cite{CM} contains an averaging technique by which, given an ordinary compactly supported pseudodifferential operator, one obtains a $G$-invariant one. This will be the bridge between the general uniform classes and the invariant objects that our von Neumann algebraic formalism can handle. We will describe this technique here and give some consequences. 

If $A$ is a pseudodifferential operator with compact support, then for each $u\in C^\infty_c(M)$, the average 
\[{\rm Av}(A)u  = \int_G dt\, \rho_t A \rho_{t^{-1}}\, u\]
makes sense, defining a smooth function on $M$, as it only involves integration over a compact subset of $G$. By this method, properly supported $G$-invariant pseudodifferential operators can be obtained from compactly supported ones. 

An important application of the averaging method for us is the invariant version of Prop.\ \ref{badparam}:
\begin{prop}\label{goodpram} If $A\in EUL_G^m(M)$, then there exists an operator $B\in EUL_G^{-m}(M)$ such that 
\begin{equation}\label{goodz} BA = \bid - R_1, \quad AB = \bid - R_2, \quad {\rm with} \quad R_1, R_2\in UL_G^{-\infty}(M).\end{equation}
\end{prop}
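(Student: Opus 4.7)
The idea is to start with a non-invariant parametrix from Proposition \ref{badparam} and average it via the Connes--Moscovici procedure of Subsection \ref{cma}, choosing the cutoff so that averaging recovers the identity on the nose. First, apply Proposition \ref{badparam} to obtain $B_0\in EUL^{-m}(M)$ with
\[B_0A=\mathbf{1}-R_1^0,\qquad AB_0=\mathbf{1}-R_2^0,\qquad R_1^0,R_2^0\in UL^{-\infty}(M).\]
Second, construct $\chi\in C^\infty_c(M)$ with $\int_G\chi(pt)\,dt=1$ for all $p\in M$. This exists since $X=M/G$ is compact: pick any nonnegative $\psi\in C^\infty_c(M)$ whose $G$-saturation covers $M$ (e.g.\ via the partition-of-unity construction in Lemma \ref{gequiv} applied to a bump function on a trivializing neighborhood), form the $G$-invariant positive function $\Psi(p)=\int_G\psi(pt)\,dt$, which descends to a continuous positive function on the compact $X$ and is therefore bounded below, and set $\chi=\psi/\Psi$.

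Third, set
\[B:=\mathrm{Av}(\chi B_0)=\int_G\rho_t\,\chi B_0\,\rho_{t^{-1}}\,dt.\]
Since $B_0$ is properly supported and $\chi$ has compact support, $\chi B_0$ has compactly supported Schwartz kernel, so averaging makes sense. Invariance of $B$ is built in, and the kernel is $K_B(p,q)=\int_G K_{\chi B_0}(pt,qt)\,dt$. Properness of the $G$-action (a consequence of the principal bundle structure, with the orbits closed and compactly-generated stabilizers trivial) ensures that for each $(p,q)$ only a compact set of $t$ contributes, and the uniform symbol estimates for $B_0$ in the local charts $U_{p_kt}$ (condition (iii) of Definition \ref{defulg}) carry through the integration to give $B\in UL^{-m}(M)$, hence $B\in UL_G^{-m}(M)$.

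Fourth, verify the parametrix equations. Using $G$-invariance of $A$ (so $\rho_{t^{-1}}A=A\rho_{t^{-1}}$) compute
\[BA=\int_G\rho_t\,\chi B_0 A\,\rho_{t^{-1}}\,dt=\int_G\rho_t\chi\rho_{t^{-1}}\,dt-\int_G\rho_t\,\chi R_1^0\,\rho_{t^{-1}}\,dt.\]
A direct calculation shows $\rho_t\chi\rho_{t^{-1}}$ acts as multiplication by $p\mapsto\chi(pt)$, and by the defining property of $\chi$ the first integral equals $\mathbf{1}$; the second integral is $\mathrm{Av}(\chi R_1^0)$, which by the same averaging analysis lies in $UL_G^{-\infty}(M)$. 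This gives $BA=\mathbf{1}-R_1$. For $AB$, first observe $B\equiv B_0\bmod UL^{-\infty}(M)$: writing $B=B(AB_0+R_2^0)=(BA)B_0+BR_2^0=B_0-R_1B_0+BR_2^0$, the two correction terms are in $UL^{-\infty}(M)$ by the composition rule $UL^\ast\cdot UL^{-\infty}\subset UL^{-\infty}$. Then $AB=AB_0+A(B-B_0)=\mathbf{1}-R_2^0+A\cdot(\text{smoothing})=\mathbf{1}-R_2$ for some smoothing $R_2$; since $AB$ is $G$-invariant, so is $R_2$, placing it in $UL_G^{-\infty}(M)$. Ellipticity of $B$ follows from $B\equiv B_0\bmod UL^{-\infty}(M)$ together with $B_0\in EUL^{-m}(M)$, so $B\in EUL_G^{-m}(M)$, completing the proposition.

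\textbf{Main obstacle.} The hard step is the third one: checking that the averaged operator remains in the uniform pseudodifferential class. The symbol estimates in Definition \ref{defulg}(iii) must be shown to survive integration over $G$, which relies crucially on the compact support of $\chi$, properness of the $G$-action, and the uniform-in-$t$ bounds already built into the class $UL^{-m}(M)$. Everything else is bookkeeping with the composition properties of the calculus.
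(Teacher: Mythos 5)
Your proposal takes essentially the same route as the paper: start from the non-invariant properly supported parametrix of Prop.~\ref{badparam} and make it invariant by Connes--Moscovici averaging against a cutoff function $\chi\ge 0$ with $\int_G\chi(pt)\,dt=1$, the step you flag as the main obstacle (that the averaged operator stays in the uniform calculus of the same order) being exactly what the paper outsources to \cite[Prop.\ 1.3]{CM}. The only cosmetic difference is the construction of the cutoff: the paper builds it as $\sum_k\varphi_k(p)\psi(t_{k,p})$ with $\psi$ a cutoff on $G$ and $(\varphi_k)_k$ the invariant partition of unity, while you normalize a compactly supported bump by its orbit integral; both are valid, so your plan is correct.
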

\begin{proof} We begin with the ordinary properly supported parametrix, the existence of which is given by Prop.\ \ref{badparam} and apply \cite[Prop.\ 1.3]{CM}. This depends on the existence of a cutoff function for $M$; see \cite[p295]{CM}. With the bundle map $\pi$, our invariant partition of unity $(\varphi_k)_1^N$, a piecewise smooth section $\sigma:X\to M$, and a cutoff function $\psi$ on $G$, define $t_{k,p}\in G$ so that $p=\sigma(\pi(p))t_{k,p}$ holds in the support of $\varphi_k$. Then the function $f(p)=\sum \varphi_k(p) \psi(t_{k,p})$ possesses the required properties. \end{proof}
Connes and Moscovici give a version of Prop.\ \ref{esssa}.
\begin{prop}\label{cms} With $m\ge 1$, consider $A\in EUL_G^m(M)$ as an operator in $L^2(M)$ with dense domain $C_c^\infty(M)$. It follows that the domain of the closure of $A$ coincides with the subspace of all $u\in L^2(M)$ for which $Au\in L^2(M)$, in the distributional sense. \end{prop}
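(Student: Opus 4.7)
The plan is to show that $\dom(\overline{A})$ coincides with the maximal domain
\begin{equation}
D_{\max} := \{u \in L^2(M) : Au \in L^2(M) \text{ in the distributional sense}\}, \notag
\end{equation}
by establishing each inclusion separately.

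For $\dom(\overline{A}) \subseteq D_{\max}$, the argument is standard duality: given $C_c^\infty(M) \ni u_n \to u$ in $L^2$ with $Au_n \to v$ in $L^2$, I pair against any test function $\varphi \in C_c^\infty(M)$. The formal adjoint $A^*$ also lies in $UL_G^m(M)$ and is properly supported, hence maps $C_c^\infty$ into itself, so passing to the limit in $\langle Au_n, \varphi\rangle = \langle u_n, A^*\varphi\rangle$ identifies $v = Au$ as distributions.

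The reverse inclusion $D_{\max} \subseteq \dom(\overline{A})$ uses elliptic regularity. By Prop.\ \ref{sobodiff}, if $u \in L^2 \subset H^{-\infty}(M)$ and $Au \in L^2 = H^0(M)$, then $u \in H^m(M)$, hence $D_{\max} \subseteq H^m(M)$. It suffices therefore to approximate any $u \in H^m(M)$ by elements of $C_c^\infty(M)$ in the graph norm $\|\cdot\|_{L^2} + \|A\,\cdot\,\|_{L^2}$. This is carried out in two stages. First, truncation: exploiting the bounded geometry of $M$ (Lemma \ref{gequiv}), construct cutoffs $\chi_R \in C_c^\infty(M)$ with $\chi_R \to 1$ pointwise, derivatives of all orders uniformly bounded in $R$, and with the support of $d\chi_R$ escaping to infinity -- concretely by lifting a compact exhaustion family of cutoffs from $G$ through the local trivializations $\pi^{-1}(O_k) \cong G \times O_k$ and the invariant partition of unity. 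Expanding
\begin{equation}
A(\chi_R u) - Au = [A, \chi_R] u + (\chi_R - 1) Au, \notag
\end{equation}
the second summand tends to zero in $L^2$ by dominated convergence, while the first is a commutator $[A, \chi_R] \in UL^{m-1}(M)$ whose full symbol vanishes except where $\chi_R$ is nonconstant. By Prop.\ \ref{sobodiff}, and since $m \ge 1$, this commutator defines a bounded map $H^{m-1}(M) \to L^2(M)$; applied to $u \in H^m(M) \subset H^{m-1}(M)$ with its symbol support receding to infinity, its contribution also tends to zero in $L^2$. Second, mollification: for $w = \chi_R u \in H^m(M)$, now compactly supported, cover ${\rm supp}(w)$ by finitely many of the coordinate balls $U_{p_k t}$ from Sect.\ \ref{msc} and apply a standard Friedrichs mollifier in these coordinates to obtain $v_\epsilon \in C_c^\infty(M)$ with $v_\epsilon \to w$ and $A v_\epsilon \to A w$ both in $L^2$. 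A diagonal extraction in $R$ and $\epsilon$ then yields the desired approximating sequence.

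The principal obstacle is the commutator estimate for $[A, \chi_R] u$: the escape of ${\rm supp}(d\chi_R)$ to infinity only helps because the symbol bounds in $UL^m(M)$ are uniform across the manifold (Def.\ \ref{defulg}(iii)), which is precisely the feature the invariant Riemannian structure on $M$ affords. The hypothesis $m \ge 1$ enters here so that the commutator's order $m - 1$ is nonnegative and Prop.\ \ref{sobodiff} supplies the boundedness from $H^{m-1}$ to $L^2$ directly, without recourse to inversion by elements of $EUL_G^{-m}(M)$.
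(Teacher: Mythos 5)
Your overall route is the same one the paper's proof invokes by citation: elliptic regularity (Prop.\ \ref{sobodiff}, itself resting on the invariant parametrix) to place any $u$ in the maximal domain into $H^m(M)$, followed by an exhaustion by cutoffs and Friedrichs mollification to approximate in the graph norm; this is precisely the Atiyah / Connes--Moscovici regularization-and-exhaustion scheme, here written out explicitly in the uniform calculus. The easy inclusion $\dom(\overline{A})\subset D_{\max}$ via the properly supported formal adjoint is fine, as is the mollification stage, since $A:H^m(M)\to L^2(M)$ is bounded.

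The one step that is not yet an argument is the vanishing of $\|[A,\chi_R]u\|_{L^2}$. Uniform boundedness of $[A,\chi_R]:H^{m-1}(M)\to L^2(M)$ together with the remark that its ``symbol support recedes to infinity'' does not by itself force convergence to zero on a fixed $u$; you must use that the part of $u$ the commutator actually sees is small. Two standard ways to close this: (i) note that $K_{[A,\chi_R]}(p,q)=K_A(p,q)\,(\chi_R(q)-\chi_R(p))$ vanishes for ${\rm dist}_g(p,q)>C_A$, so $[A,\chi_R]$ is properly supported in a fixed-width neighborhood of ${\rm supp}(d\chi_R)$ and $\|[A,\chi_R]u\|_{L^2}\lesssim\|\theta_R u\|_{H^{m-1}}$ for a cutoff $\theta_R$ equal to $1$ there; then invoke the uniformly locally finite partition-of-unity description of the uniform Sobolev norm to see that this tail of $\|u\|_{H^{m-1}}$ tends to zero as the supports escape; or (ii), closer to Connes--Moscovici and simpler, take $\chi_R=\chi(\rho/R)$ with $\rho$ a proper function with bounded derivatives (e.g.\ built from the cutoffs of Prop.\ \ref{goodpram}), so that \emph{all} derivatives of $\chi_R$ are $O(1/R)$; then the $UL^{m-1}$ seminorms of $[A,\chi_R]$, hence its $H^{m-1}\to L^2$ operator norm, tend to zero and no localization is needed. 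Finally, your explanation of where $m\ge 1$ enters is off: Prop.\ \ref{sobodiff} gives boundedness $H^{m-1}\to L^2$ for any real order, so your argument never actually uses $m\ge 1$ (for $m\le 0$ the statement is trivial since $A$ is $L^2$-bounded); the hypothesis is simply inherited from the Connes--Moscovici formulation. This is harmless but worth stating accurately.
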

\begin{proof} The regularization procedure from the proof of \cite[Prop.\ 3.1]{A} works here as the main ingredient is the invariant parametrix from Prop.\ \ref{goodpram}. The exhaustion method from the proof of \cite[Lemma 3.1]{CM} remains valid using the cutoff functions again from Prop.\ \ref{goodpram}.\end{proof}
\begin{rem} Prop.\ \ref{cms} allows us to reinterpret the formal adjoint $A^*$ of an operator $A$ satisfying its hypotheses as the Hilbert space adjoint as well. Also, from now on we will always consider the $L^2$-closures of pseudodifferential operators by convention.\end{rem}
\begin{prop}\label{corellfin} Let $A\in EUL_G^m(M)$. It follows that $\dim_G\ker(A)<\infty$. \end{prop}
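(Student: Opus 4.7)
The plan is to show that the orthogonal projection $P$ onto $\ker(A)$ lies in $\mathcal B(L^2(M))^G$ and has finite $G$-trace, since by definition $\dim_G\ker(A) = \Trg(P)$. The key will be combining elliptic regularity with Prop.\ \ref{bigprop}.

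First I would verify that $P$ is an invariant bounded operator on $L^2(M)$. By the preceding remark, $A$ is to be interpreted as its $L^2$-closure, which is a closed operator, so $\ker(A)$ is a closed subspace of $L^2(M)$. Since $A$ is $G$-invariant, $\ker(A)$ is a $G$-invariant closed subspace, and so the orthogonal projection $P$ onto $\ker(A)$ commutes with the unitary $G$-action on $L^2(M)$; that is, $P\in\mathcal B(L^2(M))^G$.

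Next I would show $\im(P)\subset H^s(M)$ for every $s$. For $u\in\ker(A)$ one has $Au=0$ in the distributional sense (by the characterization of $\dom(A)$ from Prop.\ \ref{cms}), so $Au\in H^s(M)$ for all $s$. Ellipticity of $A$ and the regularity part of Prop.\ \ref{sobodiff} then give $u\in H^{s+m}(M)$ for all $s$, hence $\ker(A)\subset H^\infty(M)=\bigcap_s H^s(M)$. In particular, $\im(P)\subset H^s(M)$ for any $s>n/2$.

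Now Prop.\ \ref{bigprop} applies directly to $P$: since $P\in\mathcal B(L^2(M))^G$ and $\im(P)\subset H^s(M)$ with $s>n/2$, we get $\Trg(P^*P)<\infty$. But $P=P^*=P^2$, so $P^*P=P$, and therefore
\[\dim_G\ker(A)=\Trg(P)=\Trg(P^*P)<\infty,\]
which is the claim.

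There is really no serious obstacle here: the entire argument is an assembly of earlier results, and the only thing to be slightly careful about is the identification of $\ker(A)$ with $\im(P)$ and the verification that $A$'s distributional kernel agrees with the Hilbert-space kernel of its closure, which is exactly what Prop.\ \ref{cms} provides. The heart of the matter — that smoothness of a $G$-invariant projection's image forces finite $G$-trace — is Prop.\ \ref{bigprop} itself.
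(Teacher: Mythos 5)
Your proof is correct and takes essentially the same route as the paper's: show that $\ker(A)\subset H^\infty(M)$, note that the orthogonal projection $P$ onto $\ker(A)$ is an invariant self-adjoint element of $\mathcal B(L^2(M))^G$, and apply Prop.\ \ref{bigprop} to get $\Trg(P)=\Trg(P^*P)<\infty$. The only cosmetic difference is that the paper deduces the smoothness of the kernel from the invariant parametrix identity $u=R_1u$ with $R_1=\bid-BA\in UL_G^{-\infty}(M)$ (Prop.\ \ref{goodpram}), whereas you invoke the elliptic-regularity part of Prop.\ \ref{sobodiff}, which rests on the same parametrix construction.
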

\begin{proof} Let $B\in EUL_G^{-m}(M)$ be a parametrix for $A$. With $R_1=\bid-BA\in UL_G^{-\infty}(M)$, we see that $u\in\ker(A)$ satisfies $R_1u=u$; thus $\ker(A)\subset\im(R_1)\subset H^\infty(M)$. Since $A$ is closed, there is an invariant, self-adjoint projection $P$ onto $\ker(A)$, so we have $\im(P)\subset H^\infty(M)$. But $\Trg(P) = \Trg(P^* P)<\infty$ by Prop.\ \ref{bigprop}. \end{proof}
Since taking the adjoint of an operator preserves proper support and ellipticity, we could follow Connes and Moscovici at this point and go on to define the $G$-index of operators in $EUL_G^m(M)$. This is the index claim, Cor.\ \ref{easy} from the introduction.

We here provide a weaker sufficient condition for membership in $\dom(\Trg)$ than used in the proof of Prop.\ \ref{corellfin}. 
\begin{prop}\label{smo} For $r\in\mathbb R$, suppose that $A\in UL_G^m(M)$ for $m< -n$. It follows that $A$ is a $G$-trace-class operator.\end{prop}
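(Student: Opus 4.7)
The plan is to express $A$ as a finite sum of products, each of two $G$-Hilbert--Schmidt operators, placing $A$ in $\dom(\Trg)$ by its very definition.

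Since $m<-n$ we have $m/2<-n/2$, so Lemma \ref{goodlambdas} furnishes an invariant elliptic $\Lambda\in EUL_G^{m/2}(M)$, and Prop.\ \ref{goodpram} yields an invariant parametrix $B\in EUL_G^{-m/2}(M)$ with
\[B\Lambda=\bid-R,\qquad R\in UL_G^{-\infty}(M).\]
Substituting $\bid=B\Lambda+R$ on the right of $A$ produces the decomposition
\[A=(AB)\,\Lambda+A\,R.\]

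Each factor in the two resulting products is $G$-Hilbert--Schmidt. The composition rule for the uniform classes gives $AB\in UL_G^{m/2}$ and $AR\in UL_G^{-\infty}$, while $A\in UL_G^m$ and $\Lambda\in UL_G^{m/2}$ by construction. Every one of the orders $m$, $m/2$, and $-\infty$ is strictly below $-n/2$, the crucial inequality $m/2<-n/2$ being precisely the hypothesis $m<-n$. By Cor.\ \ref{suffGHS}, all four of $AB$, $\Lambda$, $A$, and $R$ belong to $\dom_{1/2}(\Trg)$.

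Because $\Trg(T^*T)=\Trg(TT^*)$, the class $\dom_{1/2}(\Trg)$ is preserved by the adjoint, so any product $X\cdot Y$ of two $G$-Hilbert--Schmidt operators may be written as $(X^*)^*Y$ with both $X^*$ and $Y$ in $\dom_{1/2}(\Trg)$. Applied to the two products $(AB)\Lambda$ and $AR$, this exhibits $A$ in the form $\sum_k A_k^*B_k$ required by the definition of the $G$-trace class, completing the proof. The only technical point is the order bookkeeping: the intermediate order $m/2$ is chosen so that both factors of the leading product fall strictly below $-n/2$, while the smoothing remainder $AR$ is automatically two-sided HS; the hypothesis $m<-n$ is exactly what makes the equal split $m/2+m/2$ admissible.
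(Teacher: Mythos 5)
Your proof is correct and is essentially the paper's own argument in mirror image: the paper inserts $\bid=B\Lambda+R$ on the \emph{left} of $A$ (with $\Lambda\in EUL_G^{-m/2}(M)$ of positive order, parametrix $B\in EUL_G^{m/2}(M)$), obtaining $A=B(\Lambda A)+RA$ with all factors of order at most $m/2<-n/2$, exactly as you do with $A=(AB)\Lambda+AR$. Your additional remark that $\dom_{1/2}(\Trg)$ is stable under adjoints, so each product can be put literally in the form $A_k^*B_k$, is a small bookkeeping point the paper leaves implicit; otherwise the two arguments coincide.
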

\begin{proof} Take  $m< -n$, let $\Lambda=\Lambda_{-m/2}\in EUL_G^{-m/2}(M)$ be the operator constructed in Lemma \ref{goodlambdas} and let $B\in EUL^{m/2}_G(M)$ be $\Lambda$'s parametrix so that $B\Lambda =\bid-R$ with $R\in UL^{-\infty}_G(M)$. Multiplying this equation through by $A$ we get 
\[A=B\Lambda A+RA.\] 
Since $\Lambda A, B\in UL_G^{m/2}(M)$, we have $\Lambda A, B\in\dom_{1/2}(\Trg)$ by Cor.\ \ref{suffGHS} which also implies that $A,R\in\dom_{1/2}(\Trg)$.\end{proof}
%
%%%%%%%%%%%%%%%%%%%%%
\section{Existence theory for pseudodifferential equations}\label{PW}
%%%%%%%%%%%%%%%%%%%%%
%
\subsection{The $G$-Fredholm property} In this section we will give an application of our $G$-trace result to solving problems in $L^2$ involving invariant operators on $G$-bundles. See \cite[\S\S1,2]{Srr} and \cite[\S3]{P1} for much finer results on $G$-morphisms and the $G$-Fredholm property.
\begin{definition}\label{gfred} Let $\mathcal H_1, \mathcal H_2$ be Hilbert spaces on which $G$ acts strongly continuously and unitarily. A closed, densely defined, $G$-invariant operator $A:\mathcal H_1\to\mathcal H_2$ is said to be $G$-{\it Fredholm} if $\dim_G\ker(A)<\infty$ and if there exists a closed, invariant subspace $Q\subset\im(A)$ so that $\dim_G(\mathcal H_2\ominus Q)<\infty$. \end{definition}
\begin{prop}\label{ellfred} Let $m\ge1$ and assume that $A\in EUL_G^m(M)$ is self-adjoint. It follows that $A$ is $G$-Fredholm. \end{prop}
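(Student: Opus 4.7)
The plan is to verify the two conditions in Definition \ref{gfred}. The first, $\dim_G\ker(A)<\infty$, is exactly Proposition \ref{corellfin}, so the real task is to produce a closed, $G$-invariant subspace $Q\subset\im(A)$ with $\dim_G(L^2(M)\ominus Q)<\infty$.

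Since $A$ is self-adjoint by Proposition \ref{cms} and commutes with the strongly continuous unitary $G$-action, the same holds for every bounded Borel function of $A$. For $\epsilon>0$ I would therefore form the spectral projection $E_\epsilon:=\chi_{[-\epsilon,\epsilon]}(A)\in\mathcal B(L^2(M))^G$. The strategy is to show that $\dim_G\im(E_\epsilon)<\infty$ for every $\epsilon>0$, so that $Q:=\ker(E_\epsilon)=(\bid-E_\epsilon)L^2(M)$ will serve: on $Q$ the spectrum of $A$ is bounded away from $0$ by $\epsilon$, so functional calculus furnishes a bounded inverse of $A$ on $Q$, whence $A(\dom(A)\cap Q)=Q\subset\im(A)$, and $L^2(M)\ominus Q=\im(E_\epsilon)$ has finite $G$-dimension.

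The heart of the argument is thus the finite-$G$-dimensionality of $\im(E_\epsilon)$, which I would obtain by a bootstrap using elliptic regularity. For $u\in\im(E_\epsilon)$ one has $\|A^k u\|_{L^2}\le\epsilon^k\|u\|_{L^2}$ for all $k\ge 0$, so $u$ lies in the domain of every power of $A$. Since $\dom(A)=H^m(M)$ by Proposition \ref{cms}, and since Proposition \ref{sobodiff} promotes $Au\in H^s$ to $u\in H^{s+m}$, one deduces inductively that $u\in H^{km}(M)$ for all $k$, hence $\im(E_\epsilon)\subset H^\infty(M)$. With $m\ge 1$ and $n=\dim M$, one can invoke Proposition \ref{bigprop} to conclude that $\Trg(E_\epsilon^*E_\epsilon)=\Trg(E_\epsilon)=\dim_G\im(E_\epsilon)<\infty$.

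The main subtlety will be verifying the bootstrap cleanly, namely that at each step $A^k u$ still lies in $\dom(A)$; this follows immediately from the spectral bound $\|A(A^k u)\|_{L^2}=\|A^{k+1}u\|_{L^2}\le\epsilon^{k+1}\|u\|_{L^2}$ and the characterization of $\dom(A)$ in Proposition \ref{cms}. Once this is in place, the combination of spectral theory for self-adjoint invariant operators with the Sobolev-degree trace criterion of Proposition \ref{bigprop} fits together to give the $G$-Fredholm property essentially for free.
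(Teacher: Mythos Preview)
Your proposal is correct and follows essentially the same route as the paper: spectral truncation to $[-\epsilon,\epsilon]$, an elliptic-regularity bootstrap showing the truncated range lies in $H^\infty(M)$, and then Prop.~\ref{bigprop} to conclude finite $G$-dimension, with $Q$ taken as the orthogonal complement. The only quibble is bibliographic: the identification $\dom(A)=H^m(M)$ and the essential self-adjointness are the content of Prop.~\ref{esssa} rather than Prop.~\ref{cms}.
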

\begin{proof} The property that $\dim_G\ker(A)<\infty$ is given by Prop.\ \ref{corellfin}. For the second, we will form the space $Q$ as a spectral subspace of $A$. To that end, let $A=\int_{-\infty}^\infty \lambda dE_\lambda$ be the spectral resolution of $A$ and put $P_\delta = \int_{-\delta}^\delta dE_\lambda$ for $\delta\ge 0$. Now, Prop.\ \ref{esssa}  gives that the domain of $A$, as an operator in $L^2$, is precisely $H^m(M)$. Further, $A$ preserves $\im(P_\delta)\subset\dom(A)=H^m(M)$, thus $A^k u\in H^m(M)$ for $u\in\im(P_\delta)$ as well. But then Prop.\ \ref{sobodiff} implies that $\im(P_\delta)\subset H^\infty(M)$. Now continue as in the proof of Prop.\ \ref{corellfin} and take $Q=\im(P_\delta)^\perp$. \end{proof}
\begin{rem} The operator $A$, when restricted to $Q=\im(P)^\perp$, has a bounded inverse in $L^2$. In symbols, $\|A^{-1}|_Q\|_{L^2\to L^2}<\infty$. The elliptic estimate then implies that solutions on this subspace gain $m$ degrees on the Sobolev scale.\end{rem}
\subsection{Example} L.\ H\"ormander showed in \cite{H2} that if the iterated commutators of a collection of vector fields generate the tangent space, then a quadratic form constructed with them satisfies a subelliptic estimate. There is a version of this assertion with a simpler proof, as follows.
\begin{prop}\label{horm}\cite[Thm.\ 5.4.7]{FK} Suppose $X_1,\dots, X_m$ are complex vector fields on the real manifold $M$ such that each $\bar X_j$ is a linear combination of the $X_j$. Suppose also that the iterated brackets $X_j, [X_{j_1}, X_{j_2}], [X_{j_1}, [X_{j_2}, X_{j_3}]], \dots$ of order $\le p$ span all vector fields on $M$. Then if $V$ is a relatively compact subdomain of $M$, 
\begin{equation}\label{qgains}\|u\|_{2^{1-p}}^2 \lesssim \sum_1^m \|X_j u \|_{L^2}^2 + \|u\|_{L^2}^2\end{equation}
uniformly for all smooth functions $u$ supported in $V$. \end{prop}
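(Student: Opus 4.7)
The plan is to prove the estimate by induction on bracket length, using a pseudodifferential calculus of fractional order. Since the statement is local, a partition of unity reduces it to proving the estimate for $u$ supported in a small ball of a fixed coordinate chart inside $V$. Write $Q(u) = \sum_j \|X_j u\|_{L^2}^2 + \|u\|_{L^2}^2$ for the right-hand side of \eqref{qgains}, and let $\Lambda^s$ denote a classical pseudodifferential operator of order $s$ in these coordinates. The reality hypothesis, that each $\bar X_j$ lies in the complex span of the $X_j$, is exactly what is needed so that every formal adjoint $X_j^*$ equals $-X_j$ plus a smooth multiplication operator; this makes integrations by parts on terms of the form $(X_j u, \cdot)$ controllable by $Q(u)^{1/2}$.

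The inductive claim is that for every iterated bracket $Y$ of length $k \in \{1,\dots,p\}$ and every $u$ compactly supported in the fixed ball,
\[
\|Y u\|_{2^{1-k} - 1}^{2} \lesssim Q(u).
\]
The base case $k=1$ is just $\|X_j u\|_{0}^{2} \le Q(u)$, which sits at Sobolev order $2^{0} - 1 = 0$. Once the claim is established at $k = p$, the hypothesis that brackets of length $\le p$ span the tangent bundle lets one express each coordinate partial derivative $\partial_\ell$ as a smooth linear combination of such brackets on the chart. Boundedness of smooth multiplication operators on negative-order Sobolev spaces then gives $\|\partial_\ell u\|_{2^{1-p} - 1}^{2} \lesssim Q(u)$ for each $\ell$, which, combined with $\|u\|_{L^2}^2 \le Q(u)$, is exactly the asserted bound $\|u\|_{2^{1-p}}^{2} \lesssim Q(u)$.

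The inductive step---assuming the bound at order $s - 1$ for a bracket $Y$, derive the bound at order $s/2 - 1$ for any commutator $[X_i, Y]$---is driven by the identity
\[
\bigl([X_i, Y] u,\, v\bigr) = -(Y u,\, X_i^* v) - (X_i u,\, Y^* v),
\]
taken with $v = \Lambda^{s - 2}[X_i, Y] u$. One splits the weight symmetrically as $\Lambda^{s-2} = \Lambda^{(s-2)/2} \cdot \Lambda^{(s-2)/2}$ and commutes each factor through the first-order operators $X_i^*, Y^*$; Cauchy--Schwarz, the induction hypothesis $\|Y u\|_{s-1} \lesssim Q(u)^{1/2}$, and the trivial bound $\|X_i u\|_{0} \le Q(u)^{1/2}$ together dominate the right-hand side by $Q(u)^{1/2}\cdot\|[X_i, Y] u\|_{s/2 - 1}$. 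Absorbing one factor of the target norm into the left-hand side gives the claim at the next level, and the symmetric splitting is precisely what produces the halving $s \mapsto s/2$ and hence the $2^{1-p}$ exponent after $p - 1$ iterations starting from $s = 1$.

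The main obstacle is the accounting of error terms. Every commutator of a vector field through $\Lambda^\tau$ generates a lower-order pseudodifferential operator that must, at the end of the day, be absorbed either into $\|u\|_{L^2}^2$, into a small multiple of the principal term on the left, or into a lower-order bound already delivered by a previous step of the induction. Keeping these errors under control throughout the $p - 1$ iterations---and invoking the reality condition on the $\bar X_j$ at each step to keep adjoints inside the allowed algebra---is the technical core of the argument, carried out systematically in \cite[Thm.\ 5.4.7]{FK}.
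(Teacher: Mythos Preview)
The paper does not prove this proposition; it is quoted verbatim from \cite[Thm.\ 5.4.7]{FK} and used only as input for the example in \S5.2. So there is no ``paper's own proof'' to compare against. Your sketch is in fact an outline of the Folland--Kohn argument itself, as you acknowledge in your last sentence, and the overall architecture---induction on bracket length, the halving $s\mapsto s/2$ coming from a symmetric split of $\Lambda^{s-2}$, absorption of commutator errors---is the correct one.

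Two small points. First, your displayed identity has a sign slip: expanding $[X_i,Y]u=X_iYu-YX_iu$ and moving each first-order operator across gives $([X_i,Y]u,v)=(Yu,X_i^*v)-(X_iu,Y^*v)$, not $-(Yu,X_i^*v)-(X_iu,Y^*v)$. This does not affect the estimates. Second, the phrase ``commutes each factor through the first-order operators $X_i^*, Y^*$'' hides the actual work: in Folland--Kohn the two terms are handled asymmetrically, one via $\|Yu\|_{s-1}$ and the other via $\|X_iu\|_0$, and it is the interplay of these two different Sobolev scales under Cauchy--Schwarz that forces the exponent $s/2-1$ rather than something better. Your sketch states this correctly at the level of inputs and outputs but does not display the pairing that makes it come out; since you defer the details to \cite{FK} anyway, this is acceptable as a proof proposal.
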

Take $M=G$, with $G$ a unimodular Lie group with a chosen Haar measure and with $(X_j)_j$ right-invariant vector fields on $G$ satisfying the hypotheses above. Define the (positive) Hermitian form
\[Q(u,v) =  \sum_1^m \langle X_j u, X_j v\rangle_{L^2}  + \langle u,v\rangle_{L^2}.\]
Noting that the Hilbert space adjoint of an invariant vector field $X$ with respect to $L^2(G)$ is given by $X^*=-X$, the Hermitian form $Q$ is associated to the operator $A = -\sum_1^m X_j^2 + \bid$. Clearly $A$ is formally self-adjoint and invariant, and though $A$ is not elliptic, it is true that $A$ is essentially self-adjoint, \cite[\S3]{DGSC}. The estimate \eqref{qgains} expresses the fact that $Q$ gains Sobolev degree, so the techniques of \cite{KN} and \cite{P1} provide that $A$ is $G$-Fredholm.

One could concoct richer examples by noting that a cocompact, normal Lie subgroup $N\subset G$ gives rise to a fibration $N\to G\to X$ on which our formalism is applicable. Homogeneous spaces are the subject of \cite{CM}. 

For classes of pseudodifferential operators related to $A$ above, the reader is directed to \cite[Ch.\ XV]{Ta}.

%%%%%%%%%%%%
\subsection{The generalized Paley--Wiener theorem of Arnal--Ludwig} Using the $G$-Fredholm property to derive existence results relies on the following easy fact.  
\begin{lemma}\label{easy2} Let $L$ be a Hilbert $G$-module and let $L_1, L_2\subset L$ be Hilbert submodules such that $\dim_G L_1>\dim_G (L\ominus L_2)$. It follows that $\dim_G L_1\cap L_2 \ge \dim_G L_1-\dim_G(L\ominus L_2)$. In particular, $L_1\cap L_2\neq \{0\}$.\end{lemma}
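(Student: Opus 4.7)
The plan is to deduce the dimension inequality from a rank--nullity identity for $\dim_G$ applied to a suitable invariant operator. Let $P_1, P_2, Q$ denote the $G$-invariant orthogonal projections onto $L_1$, $L_2$, and $L\ominus L_2$ respectively, and set $T = QP_1 \in \mathcal B(L)^G$. Since $Q P_1 x = 0$ iff $P_1 x \in L_2$, one checks that $\ker T = L_1^\perp \oplus (L_1\cap L_2)$, while by construction $\overline{\im T}\subset L\ominus L_2$.

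Next I would apply the polar decomposition $T = U|T|$ inside the semifinite von Neumann algebra $\mathcal B(L)^G$, on which $\Trg$ is a normal, faithful, semifinite trace. The partial isometry $U$ is $G$-invariant; $U^*U$ is the projection onto $(\ker T)^\perp = L_1\ominus(L_1\cap L_2)$, and $UU^*$ is the projection onto $\overline{\im T}$. These two projections are Murray--von Neumann equivalent in $\mathcal B(L)^G$, so the trace property of $\Trg$ gives $\Trg(U^*U) = \Trg(UU^*)$. Translating into $\dim_G$ and using monotonicity against the inclusion $\overline{\im T}\subset L\ominus L_2$ yields
$$\dim_G L_1 - \dim_G(L_1\cap L_2) \;=\; \dim_G \overline{\im T} \;\le\; \dim_G(L\ominus L_2).$$

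Rearranging gives the asserted inequality. The hypothesis $\dim_G L_1 > \dim_G(L\ominus L_2)$ then forces $\dim_G(L_1\cap L_2) > 0$, and by faithfulness of $\Trg$ on the projection $P_{L_1\cap L_2}$, we conclude $L_1\cap L_2\ne \{0\}$.

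The one step requiring care is not algebraic but analytic: $\im T$ need not be closed, so the elementary finite-dimensional rank--nullity identity does not apply verbatim. The correct substitute is the Murray--von Neumann equivalence of the initial and final projections of the polar decomposition in the type II setting, together with the fact that $\dim_G$ is computed on $\overline{\im T}$ rather than on $\im T$ itself. Once this identification is in place, both the dimension bound and the nontriviality of $L_1\cap L_2$ follow immediately.
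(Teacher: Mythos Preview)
Your proof is correct. The paper actually omits the proof entirely, introducing the lemma only as ``the following easy fact,'' so there is no argument to compare against; you have supplied the standard von Neumann algebra justification that the paper leaves implicit. Your route via the polar decomposition of $T=QP_1$ and the Murray--von Neumann equivalence of initial and final projections is exactly the right mechanism, and your closing remark about handling the non-closedness of $\im T$ by passing to $\overline{\im T}$ is precisely the point that distinguishes the semifinite setting from the naive finite-dimensional rank--nullity argument. One small addition worth making explicit: the hypothesis $\dim_G L_1 > \dim_G(L\ominus L_2)$ forces $\dim_G(L\ominus L_2)<\infty$, which is what lets you legitimately subtract in the extended reals to get the rearranged inequality.
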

Thus, once it has been established that an operator has a good inverse on the complement $Q$ of a subspace of finite $G$-dimension (like $Q=\im(P_\delta)^\perp$ above), it only remains to understand how large subspaces can come about. 

Here we will describe one method in \cite[\S 3]{P2} for determining that a closed, invariant subspace of $L^2(M)$ have infinite $G$-dimension. It is based on a generalized Paley--Wiener theorem of \cite{AL}, which reads
\begin{prop}\label{AL}\cite[Thm.\ 1.3]{AL} Let $G$ be a locally compact, unimodular group with Haar measure $m$, containing a closed, noncompact, connected subset. Let $f$ be in $L^2(G, m)$ such that $m({\rm supp}(f)) < m(G)$ and such that there exists $\kappa\in L^2(G, m)$ with $\lambda_\kappa f = f$. Then $f = 0$ m-a.e. \end{prop}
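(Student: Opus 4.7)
The plan is to combine the abstract Plancherel decomposition of $L^2(G)$ for a unimodular group with a Benedicks-type uncertainty principle, using the closed noncompact connected subset to manufacture enough translates of $f$ to force $f=0$ almost everywhere.

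First I would apply the Plancherel theorem for unimodular groups: $L^2(G)\cong \int^{\oplus}_{\widehat G}\mathcal H_\pi\otimes\mathcal H_\pi^*\, d\mu(\pi)$ as a direct integral of Hilbert--Schmidt fibers, with left convolution by $\kappa$ acting fiberwise as $T\mapsto \hat\kappa(\pi)\, T$. Because $\kappa\in L^2(G)$, the operator $\hat\kappa(\pi)$ is Hilbert--Schmidt, hence compact, for $\mu$-a.e.\ $\pi$. The equation $\lambda_\kappa f=f$ then disintegrates to $\hat\kappa(\pi)\hat f(\pi)=\hat f(\pi)$ fiberwise, exhibiting $\hat f(\pi)$ as an element of the $1$-eigenspace of a compact operator, which is finite-dimensional. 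So $\hat f$ is concentrated, fiber by fiber, in a ``finite-rank'' piece of the Plancherel bundle --- the noncommutative analogue of having bounded frequency support.

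Next I would use the closed noncompact connected subset $H\subset G$ to propagate $f$: for every $h\in H$, the right-translate $\rho_h f$ still satisfies $\lambda_\kappa(\rho_h f)=\rho_h f$, and so its Plancherel transform sits in the same finite-rank subbundle. Connectedness lets me deform continuously through $H$; noncompactness lets me push $\mathrm{supp}(f)$ arbitrarily far, so a suitable averaging of the $\rho_h f$ over a large compact piece of $H$ produces a function whose support sweeps out a set much larger than $\mathrm{supp}(f)$, while still living in the same finite-rank fiberwise subspace.

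The main obstacle --- and the hard technical core --- is converting the dual concentration statements (small support on the group side, finite-rank support on the Plancherel side) into $f=0$. This is the nonabelian analogue of Benedicks' uncertainty principle, which on $\mathbb R^n$ asserts that an $L^2$ function whose spatial and frequency supports both have finite Lebesgue measure must vanish. The classical proof periodizes $f$ against a lattice and invokes Plancherel for $\mathbb T^n$ to obtain a contradiction with the support hypothesis. I would imitate this by periodizing along the noncompact connected set $H$ and using the Plancherel disintegration as a substitute for Fourier series; the hypothesis $m(\mathrm{supp}(f))<m(G)$ ensures that this periodization produces an honest ``gap'' on which the averaged function must vanish identically, which forces $\hat f\equiv 0$ fiberwise and hence $f=0$. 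I expect the delicate part to be making this averaging converge and interact well with the non-Euclidean structure of the Plancherel measure on $\widehat G$, which is exactly why both the connectedness and noncompactness of the chosen subset of $G$ appear in the hypotheses.
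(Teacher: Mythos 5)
There is no internal proof to compare with here: the paper imports this statement verbatim from Arnal--Ludwig \cite[Thm.\ 1.3]{AL} and never proves it, so your sketch must stand on its own, and as written it has two genuine gaps. First, the Plancherel reduction is simultaneously too restrictive and too weak. Too restrictive, because a decomposition of $L^2(G)$ into Hilbert--Schmidt fibers over irreducibles requires $G$ to be second countable and type I, whereas the statement concerns arbitrary locally compact unimodular groups; the correct framework is the group von Neumann algebra $\mathcal L_G$ with its canonical trace $\trg$ (Prop.\ \ref{ped}). Too weak, because ``each $\hat f(\pi)$ lies in a finite-dimensional eigenspace'' carries essentially no information: for abelian $G$ every fiber is one-dimensional, so every $f$ satisfies your fiberwise conclusion, yet the theorem is nontrivial there. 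The usable consequence of $\lambda_\kappa f=f$ with $\kappa\in L^2(G)$ is global, not fiberwise: the fixed space $F=\{g\in L^2(G):\lambda_\kappa g=g\}$ is invariant under right translations, its projection $P$ lies in $\mathcal L_G$, and $P=\lambda_\kappa P$ forces $P=(\lambda_\kappa P)(\lambda_\kappa P)^*$, hence $\trg(P)\le\|\kappa\|_{L^2(G)}^2<\infty$. It is this finite $G$-trace (the analogue of ``frequency support of finite Plancherel measure'') that must be played off against $m({\rm supp}(f))<m(G)$, which here means finite measure since the subset hypothesis forces $G$ noncompact; your sketch never extracts it.

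Second, and more seriously, the Benedicks-type core --- which you yourself flag as the hard part --- remains an analogy rather than an argument. Benedicks' periodization requires a discrete cocompact subgroup and Fourier series on the compact quotient; the hypothesis here supplies only a closed, noncompact, connected \emph{subset}, which carries no group structure, no quotient, and hence nothing to periodize along. Likewise, ``averaging the $\rho_h f$ over a large compact piece of $H$'' does not control supports (translates overlap and cancel), and the fact that the averages stay in the fixed space uses nothing about $H$, since right translation by \emph{any} element of $G$ preserves $F$. Note the example $G=\mathbb Z$, $f=\kappa=\delta_0$: then $\lambda_\kappa=\bid$, $f\neq0$, and ${\rm supp}(f)$ has finite measure, so the conclusion fails; it is precisely the connected noncompact subset hypothesis that excludes this, and a correct proof must use it at an identified, essential point --- for instance to move the finite-measure support continuously off itself while remaining inside the fixed space of finite $G$-trace. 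Until the trace-finiteness reduction is in place and a genuine substitute for the periodization step is supplied, the proposal does not prove the proposition.
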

We immediately can apply this fact to our case.
\begin{corollary}\label{AL2} If a closed, right-invariant subspace $L\subset L^2(G)$ contains a nonzero function with compact support, then $\dim_G L=\infty$.\end{corollary}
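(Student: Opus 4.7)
The plan is a proof by contradiction. Assume that $L\subset L^2(G)$ is closed, right-invariant, contains some $f\neq 0$ of compact support, and satisfies $\dim_G L<\infty$. I will show that the Arnal--Ludwig theorem then forces $f=0$.

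First I would verify that the orthogonal projection $P_L$ onto $L$ lies in $\mathcal L_G$. Right-invariance of $L$ gives $\rho_t L\subseteq L$ for every $t\in G$, and since each $\rho_t$ is unitary (here we use unimodularity of $G$), one also has $\rho_t^{-1}L\subseteq L$; hence $\rho_t$ commutes with $P_L$, placing $P_L$ in $\mathcal L_G=\mathcal B(L^2(G))^G$. Next, the hypothesis $\dim_G L<\infty$ reads $\trg(P_L)<\infty$, and since $P_L=P_L^*P_L$, Proposition \ref{ped} produces a function $\kappa\in L^2(G)$ with $P_L=\lambda_\kappa$.

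Now I would apply the generalized Paley--Wiener theorem. Since $f\in L$, one has $\lambda_\kappa f=P_L f=f$. The compactness of $\mathrm{supp}(f)$ together with the fact that $G$ (as a connected noncompact unimodular Lie group) has infinite Haar measure gives $m(\mathrm{supp}(f))<m(G)$. Proposition \ref{AL} then yields $f=0$ almost everywhere, contradicting $f\neq 0$. Hence $\dim_G L=\infty$.

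The only genuine subtlety is making sure the hypothesis of Proposition \ref{AL} is met: one needs $G$ to contain a closed, noncompact, connected subset, which is automatic for the connected noncompact groups relevant here. Beyond that, the argument is essentially a direct concatenation of Proposition \ref{ped} (realizing finite $\trg$-projections as left convolutions by $L^2$-kernels) with Proposition \ref{AL}, so there is no real obstacle once these two structural facts are available.
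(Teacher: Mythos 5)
Your argument is correct and is essentially the paper's own proof, just run as a contradiction rather than in contrapositive form: both hinge on writing the invariant projection onto $L$ as $\lambda_\kappa$, noting that finite $G$-dimension would force $\kappa\in L^2(G)$ via Proposition \ref{ped}, and then invoking the Arnal--Ludwig theorem on $\lambda_\kappa f=f$. Your extra care about $P_L$ commuting with right translations and about $m(\mathrm{supp}(f))<m(G)$ (i.e.\ $G$ noncompact, which the statement implicitly assumes) only fills in details the paper leaves tacit.
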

\begin{proof} Let $f$ be such a function. The projection $P=\lambda_\kappa$ onto $L$ then satisfies $Pf=\lambda_\kappa f = f$, implying $\dim_G L = \|\kappa\|^2_{L^2(G)}=\infty$.\end{proof}
On the $G$-bundle $M$, we have a global translation by elements of $G$, thus we may define $\langle f\rangle$ to be the smallest closed, invariant subspace of $L^2(M)$ containing $f$. In symbols, 
\[\langle f\rangle = \overline{\left\{\sum_{k=1}^N \alpha_k\, \rho_{t_k}f\mid \alpha_k\in\mathbb C ,\ t_k\in G,\ N<\infty \right\}},\] 
where the closure is in $L^2(M)$. If $f\in L^2(M)$ has compact support, Cor.\ \ref{AL2} extends to yield $\dim_G\langle f\rangle = \infty$:
\begin{corollary}\label{bigPW}\cite[Cor.\ 2.3]{P2} Let $G\to M\to X$ be a principal $G$-bundle with $G$ a unimodular Lie group. If $0\neq f\in L^2(M)$ has compact support, then $\dim_G \langle f\rangle = \infty$.\end{corollary}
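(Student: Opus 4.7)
The plan is to reduce the statement to Corollary \ref{AL2}, which already handles the case $M = G$, by trivializing the bundle via a section and projecting onto a single slice indexed by an element of an orthonormal basis of $L^2(X)$. Fix a piecewise smooth section $\sigma: X \hookrightarrow M$ as in Sect.\ \ref{hook}, so that $p = \sigma(x) t \leftrightarrow (t, x)$ identifies $L^2(M) \cong L^2(G) \otimes L^2(X)$ and the right $G$-action becomes $\rho_t \otimes \bid$. Set $\tilde f(t, x) = f(\sigma(x) t)$; since the support of $f$ is compact and $X$ is compact, $\tilde f$ is supported in $K \times X$ for some compact $K \subset G$.

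Next, I would choose an orthonormal basis $(\psi_l)_l$ of $L^2(X)$ and expand $\tilde f = \sum_l g_l \otimes \psi_l$ with $g_l \in L^2(G)$. Each Fourier coefficient inherits $\mathrm{supp}(g_l) \subset K$, and some $g_{l_0}$ is nonzero because $f \neq 0$. Let $Q = \bid \otimes |\psi_{l_0}\rangle \langle \psi_{l_0}|$ be the $G$-invariant orthogonal projection onto the slice $L^2(G) \otimes \psi_{l_0}$, which is $G$-equivariantly isomorphic to $L^2(G)$. The closure $L := \overline{Q(\langle f \rangle)}$ is then a closed, right-$G$-invariant subspace of $L^2(G) \otimes \psi_{l_0}$ containing the nonzero compactly supported function $Q(f) = g_{l_0} \otimes \psi_{l_0}$, so Corollary \ref{AL2} gives $\dim_G L = \infty$.

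The last step is to transfer this lower bound back to $\langle f \rangle$. The restriction $T := Q|_{\langle f \rangle}: \langle f \rangle \to L$ is bounded, $G$-equivariant, and has dense range by construction. Its polar decomposition $T = U|T|$ produces a $G$-invariant partial isometry $U$ mapping $\ker(T)^\perp \subset \langle f \rangle$ isometrically onto $L$ (the image is closed because $U$ is a partial isometry), so $\dim_G \langle f \rangle \geq \dim_G \ker(T)^\perp = \dim_G L = \infty$.

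I expect the polar decomposition step to be the main technical point. The naive approach would try to identify $Q(\langle f \rangle)$ directly as a submodule of $\langle f \rangle$ and read off the inequality, but $Q(\langle f \rangle)$ need not be closed; routing through the polar decomposition of $T$ supplies an honest closed $G$-submodule of $\langle f \rangle$, namely $\ker(T)^\perp$, that is $G$-isomorphic to $L$ and therefore has the same $G$-dimension. Everything else is routine: the trivialization is measure-theoretic and insensitive to the section being merely piecewise smooth, and the slicing commutes with the $G$-action because $G$ acts only on the first tensor factor.
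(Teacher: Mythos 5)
Your argument is correct: splitting $L^2(M)\cong\bigoplus_l L^2(G)\otimes\psi_l$ via a section, picking a nonzero compactly supported Fourier coefficient $g_{l_0}$ of $f$, invoking Corollary \ref{AL2} on the slice, and transferring the bound back to $\langle f\rangle$ through the partial isometry in the polar decomposition of $Q P_{\langle f\rangle}\in\mathcal B(L^2(M))^G$ (whose initial and final projections have equal $\Trg$) is sound. This is essentially the paper's route — the statement is cited from \cite[Cor.\ 2.3]{P2}, and the same reduction to a nonzero Fourier coefficient in the decomposition \eqref{decompfortrans} is exactly what is indicated in the proof of Prop.\ \ref{exhaust}; your polar-decomposition step merely makes the dimension transfer explicit.
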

It follows that if $A$ is a $G$-Fredholm operator and $f\in L^2(M)$ with compact support, then $Au=g$ has good $L^2$ solutions for $g\in\langle f\rangle$ orthogonal to a finite-$G$-dimensional subspace of $\langle f\rangle$. The question remaining is whether any of the solutions is interesting. For example, it easy to see that $f\in C^\infty_c(\mathbb R)$ generates $\langle f\rangle = L^2(\mathbb R)$. In order to say anything useful about solving differential equations, we will need to construct subspaces of $\langle f\rangle$ consisting of smooth functions. 
\begin{prop}\label{exhaust}\cite[\S3]{P2} If $f\in C^\infty_c(M)$, then there exist closed, invariant subspaces $\langle f\rangle_\delta\subset\langle f\rangle$, $\delta>0$, such that 
\begin{enumerate}
\item $\langle f\rangle_\delta\subset H^s(M)$ for all $s\in\mathbb R$, 
\item $\dim_G \langle f\rangle_\delta\to\infty$ as $\delta\to 0^+$,
\item For any $\delta>0$, the elements $g$ of $\langle f\rangle_\delta$ are of the form $g=\rho_\kappa f$ with $\kappa\in C^\infty\cap L^2(G)$.
\end{enumerate}
\end{prop}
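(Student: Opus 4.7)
My plan is to realize $\langle f\rangle$ as the closed range of an intertwiner from $L^2(G)$, and then carve out the $\langle f\rangle_\delta$ via spectral projections of a naturally arising smoothing convolution on $G$. Introduce the bounded linear map
\[
T_f:L^2(G)\longrightarrow L^2(M),\qquad T_f\kappa:=\rho_\kappa f.
\]
Boundedness follows from the compact support of $f$, by Cauchy--Schwarz in the $G$-variable after a unimodular change of variables. A direct computation shows that $T_f$ intertwines the left-regular representation on $L^2(G)$ with the $\rho$-action on $L^2(M)$; together with an approximation of $\rho_tf$ by $T_f\kappa_n$ for $\kappa_n$ concentrating near $\delta_t$, this gives $\overline{T_f(L^2(G))}=\langle f\rangle$. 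Using the invariant operators $\Lambda_s\in EUL_G^s(M)$ from Lemma \ref{goodlambdas}, the identity $\Lambda_sT_f\kappa = T_{\Lambda_sf}\kappa$ (justified by density of $C^\infty_c(G)\subset L^2(G)$ together with the closedness of $\Lambda_s$ from Prop.\ \ref{cms}, using that $\Lambda_sf\in C^\infty_c(M)$ because $\Lambda_s$ is properly supported) shows that $T_f$ in fact maps $L^2(G)$ boundedly into $H^s(M)$ for every $s\in\mathbb R$.

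The key computation is $T_f^*T_f = \rho_\phi$, the right-convolution operator on $L^2(G)$ by the matrix coefficient
\[
\phi(u):=\langle \rho_uf,f\rangle_{L^2(M)}.
\]
Since $f\in C^\infty_c(M)$, differentiating under the integral gives $\phi\in C^\infty(G)$, and a support-intersection argument ($\rho_uf$ and $f$ have disjoint supports when $u$ leaves a compact set) gives $\phi\in C^\infty_c(G)$. Therefore $T_f^*T_f$ is a smoothing convolution, mapping $L^2(G)$ into $H^\infty(G)$, and it lies in the commutant of the left-regular representation on $L^2(G)$.

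Let $E_\delta := \chi_{[\delta,\|T_f^*T_f\|]}(T_f^*T_f)$, set $\tilde\Omega_\delta:=E_\delta L^2(G)$, and define $\langle f\rangle_\delta:=T_f(\tilde\Omega_\delta)$. On $\tilde\Omega_\delta$ the operator $T_f^*T_f$ is invertible with inverse bounded by $\delta^{-1}$, and $\|T_f\kappa\|^2_{L^2(M)}=(T_f^*T_f\kappa,\kappa)\geq \delta\|\kappa\|^2_{L^2(G)}$, so $T_f|_{\tilde\Omega_\delta}$ is bounded below. This makes $\langle f\rangle_\delta$ a closed, $G$-invariant subspace of $L^2(M)$, and produces, via the polar decomposition of $T_f$, a $G$-equivariant partial-isometric identification $\tilde\Omega_\delta\cong\langle f\rangle_\delta$. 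For (3): any $\kappa\in\tilde\Omega_\delta$ equals $\rho_\phi\bigl((T_f^*T_f|_{\tilde\Omega_\delta})^{-1}\kappa\bigr)$, i.e.\ it is the convolution of an $L^2$-function against $\phi\in C^\infty_c(G)$, hence belongs to $H^\infty(G)\subset C^\infty\cap L^2(G)$. Property (1) is immediate from the $H^s$-boundedness of $T_f$ noted in the first paragraph. Property (2) follows from $\dim_G\langle f\rangle_\delta=\Trg(E_\delta)$ (via the equivariant identification), the normality of $\Trg$, and the fact that $E_\delta \nearrow \chi_{(0,\infty)}(T_f^*T_f)$ as $\delta\to 0^+$, which is the projection onto $\ker(T_f)^\perp$ whose $G$-dimension equals $\dim_G\overline{T_f(L^2(G))}=\dim_G\langle f\rangle=\infty$ by Cor.\ \ref{bigPW}.

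The step that needs the most care is the smoothness assertion in (3). The decisive observation is that $T_f^*T_f$ is \emph{not} merely a self-adjoint element of the relevant von Neumann algebra but specifically the convolution against the compactly supported $C^\infty$ function $\phi$; every element of its spectral subspace $\tilde\Omega_\delta$ is automatically in its range (by invertibility on that subspace), so smoothness is inherited from the kernel $\phi$ without requiring any direct control of the spectral projection $E_\delta$ on smooth functions. All remaining claims are formal consequences of the intertwining properties of $T_f$ combined with standard spectral calculus in the relevant von Neumann algebra.
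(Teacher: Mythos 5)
Your proposal is correct, and it runs on the same underlying mechanism as the paper's proof --- spectral subspaces of a positive operator commuting with the group action, smoothness for (3) inherited from a compactly supported smooth autocorrelation kernel, and Cor.~\ref{bigPW} plus normality of the trace for (2) --- but it is organized along a genuinely different route. The paper first treats $M=G$, using the polar decomposition $\rho_f=U|\rho_f|$ and the spectral family of $|\rho_f|$, and then handles a general bundle by reducing to the group case through a nonzero Fourier coefficient of $f$ in the decomposition \eqref{decompfortrans}; you instead work directly on $M$ with the intertwiner $T_f:\kappa\mapsto\rho_\kappa f$ and the operator $T_f^*T_f=\rho_\phi$, where $\phi(u)=\langle\rho_u f,f\rangle_{L^2(M)}$ lies in $C^\infty_c(G)$ because the action on a principal bundle is proper. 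This buys you a basis- and section-free argument that treats $M=G$ and the general bundle uniformly, and the smoothness input comes from the full $f$ itself (so $\phi$ is automatically smooth and compactly supported), whereas the paper's reduction feeds a Fourier coefficient, a priori only a compactly supported $L^2$ function, into the group-case construction; for $M=G$ your operator is essentially the paper's, conjugated by the reversal $\kappa\mapsto\kappa(\cdot^{-1})$, which is exactly the ``reversing'' isomorphism the paper invokes. Two points deserve explicit mention in a full write-up: the boundedness of $T_f$ is really the $L^1$--$L^2$ Young inequality (the naive Cauchy--Schwarz bound produces a divergent factor over noncompact $G$), and $E_\delta$ commutes with the \emph{left} regular representation, so its trace is taken in the commutant of $\lambda(G)$ rather than in $\mathcal B(L^2(M))^G$; one then uses unimodularity and the invariance of $\dim_G$ under $G$-equivariant isometries of Hilbert $G$-modules to equate $\dim_G\langle f\rangle_\delta$ with that trace of $E_\delta$ --- the same silent identification the paper makes when it asserts that $\im(P_\delta)$ and $\langle f\rangle_\delta$ ``have identical $G$-dimensions.''
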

\begin{proof} First consider the case in which $M=G$ and define $\langle f\rangle_\delta$ as follows. Let $\rho_f = U|\rho_f|$ be the polar decomposition of $\rho_f$ and write the spectral decompositon $|\rho_f|=\int_0^C\lambda dE_\lambda$. Further, for $\delta\in[0,C]\cup\{0^+\}$, put $P_\delta=\int_\delta^C dE_\lambda$ and define
\begin{equation}\label{deldef}\langle f\rangle_\delta=\{\rho_\kappa f\mid\kappa\in\im(P_\delta)\}.\end{equation}
For any $\delta>0$, $\rho_f$ is boundedly invertible on $\im(P_\delta)$ so the composition $\kappa\mapsto\rho_f \kappa\mapsto\rho_\kappa f$ is a (reversing) $G$-isomorphism from $\im(P_\delta)$ to $\langle f\rangle_\delta$. In particular, the spaces have identical $G$-dimensions. By the normality of $\trg$ and the fact that the $P_\delta$ are a spectral family, we need only establish that $\trg(P_{0^+})=\infty$ in order to establish property (2). But $U\,\im(P_{0^+}) = \overline{\im(\rho_f)}$ which contains $f$, which has compact support, so Cor.\ \ref{bigPW} gives the result. Observing that for $\delta>0$ we have $\im(P_\delta)\subset\im(\rho_{\tilde f}\rho_f)\subset C^\infty(G)$, we obtain the third claim. The first follows from this and Young's inequality. The case for a principle bundle $M$ is derived from the previous observations by applying them to a nonzero Fourier coefficient of $f\in C^\infty_c(M)$ in the decomposition \eqref{decompfortrans}. Thus we obtain a family of kernels $\kappa\in\im(P_\delta)$ as before and we define $\langle f\rangle_\delta$ as in Eq.\ \eqref{deldef}. \end{proof}
\begin{rem} In \cite[\S2.2]{P2} it is also shown that a closed, invariant subspace $L\subset L^2(M)$ containing an element $f$ such that ${\rm esssup}\,|f| = \infty$ but ${\rm esssup}\,\{\|f(\cdot, x)\|_{L^2(G)}\mid x\in X\}<\infty$ also has $\dim_G L=\infty$.
\end{rem}
%
%%%%%%%%%%%%%%%%%%
\subsection{Proof of the main theorem} We have now all the ideas needed to derive our principal result.
Prop.\ \ref{ellfred} gives that $A$ is $G$-Fredholm. Prop.\ \ref{exhaust} provides that for $\delta>0$ sufficiently small, the spectral projection $P_\delta$ of $A$ satisfies $\im(P_\delta)^\perp\cap\langle f\rangle_\delta\neq\{0\}$. The regularity results follow from the elliptic estimate Prop.\ \ref{sobodiff}. By Prop.\ \ref{goodpram}, we have an invariant parametrix $B$ such that $ABf = (\bid - R_2)f$. Now $L=\{f\mid f=R_2f\}$ is $L^2$-closed in $\im(R_2)\subset H^\infty(M)$ and thus $\dim_G L<\infty$ and so cannot contain any element with compact support. But $\ker(B)\subset L$ so it must not either. \qedsymbol
\begin{rem} We point out that in general, the parametrix solution to $Au=f$ with $f\in C_c^\infty(M)$ might give $Bf=u=0$ and the error term $R_2f$ precisely $f$. But the solution constructed here cannot exhibit this behavior, as the last statement of the main theorem provides.\end{rem}
%
%%%%%%%%%%%%%%%%%%%%%%%
\ack We thank Giuseppe Della Sala, Bernhard Lamel, Elmar Schrohe, and Alex Suciu for helpful conversations and the Math Department of Leibniz Universit\"at Hannover for its generous hospitality and lively discussions.
%%%%%%%%%%%%%%%%%%%%%%%
%

%

\begin{thebibliography}{XXXXX}
%
\bibitem{AM} Albin, P., Melrose, R.: Resolution of smooth group actions, \href{http://www.ams.org/bookstore?fn=20&arg1=conmseries&ikey=CONM-535}{{\it Contemp.\ Math.}\ (2011) {\bf 535}} 1--26
%
\bibitem{AL} Arnal, D., Ludwig, J.: Q.U.P. and Paley-Wiener properties of unimodular, especially nilpotent, Lie groups, {\it Proc.\ Amer.\ Math.\ Soc.}\ {\bf 125} (1997) 1071--1080
%
\bibitem{A} Atiyah, M.F.: Elliptic operators, discrete groups, and von Neumann algebras, {\it Soc. Math. de France, Ast\'erisque} {\bf 32-3} (1976) 43--72
%
\bibitem{B} Breuer M.: Fredholm theories in von Neumann algebras I, II.  {\it Math.\ Ann.}\ {\bf 178} (1968) 243--254 \& {\bf 180} (1969) 313--325
%
\bibitem{CM} Connes, A., Moscovici, H.: The $L^2$-index theorem for homogeneous spaces of Lie groups, {\it Ann.\ of Math.}\ {\bf 115} (1982) 291--330 
%
\bibitem{DGSC} Driver, B.K., Gross, L., Saloff-Coste, L.: Holomorphic functions and subelliptic heat kernels over Lie groups, {\it J.\ Eur.\ Math.\ Soc.}\ {\bf 11} (2009) 941--978 
%
\bibitem{FK} Folland, G.B., Kohn J.J.:  The Neumann problem for the Cauchy-Riemann complex,  {\it Ann.\ Math.\ Studies}, No.\ 75 Princeton University Press, Princeton, N.J.\ 1972
%
\bibitem{H1} H\"ormander, L.: Singular integrals, {\it Proc.\ Sympos.\ Pure Math.}\ {\bf 10} (1967) 138--183
%
\bibitem{H2} H\"ormander, L.: Hypoelliptic second order differential equations, {\it Acta Math.}\ {\bf 119} 147--171 (1967)
%
\bibitem{KN} Kohn, J.J., Nirenberg, L.: Non-coercive boundary value problems, {\it Comm.\ Pure Appl.\ Math.}\ {\bf 18} (1965) 443--492
%
\bibitem{Ko1} Kordyukov, Yu.A.: $L^p$-theory of elliptic differential operators on manifolds of bounded geometry. {\it Acta Appl.\ Math.}\ {\bf 23} (1991)  223--260
%
\bibitem{Ko2} Kordyukov, Yu.A.: Transversally elliptic operators on $G$-manifolds of bounded geometry {\it Russian J.\ Math.\ Phys.}\ {\bf 2} (1994) 175--198, {\it Russian J.\ Math.\ Phys.}\ {\bf 3} (1995) 41--64.
%
\bibitem{MS1} Meladze, G.A., Shubin, M.A.: Algebras of pseudodifferential operators on unimodular Lie groups, {\it Soviet Math.\ Dokl.}\ {\bf 30} (1984) 689--692
%
\bibitem{MS2} Meladze, G.A., Shubin, M.A.: Proper uniform pseudodifferential operators on unimodular Lie groups {\it J. of Soviet Math.}\ {\bf 45}  1421--1439 DOI: 10.1007/BF01097159
%
\bibitem{MS3} Meladze, G.A., Shubin, M.A.: A functional calculus of pseudodifferential operators on unimodular Lie groups, {\it J. of Soviet Math.}\ {\bf 47} 2607--2638, DOI: 10.1007/BF01105914
%
\bibitem{N1} Nistor, V.: Asymptotics and index for families invariant with respect to a bundle of Lie groups, {\it Rev.\ Roumaine de Math.\ Pures et Appliq.}\ {\bf 47} (2002), 451--483.
%
\bibitem{N2} Nistor, V.: An index theorem for gauge-invariant families: The case of solvable groups, {\it Acta Math.\ Hung.}\ {\bf 99} (2003), 155--183. 
%
\bibitem{NT} Nistor, V., Troitsky, E.: An index for gauge-invariant operators and the Diximier--Douady invariant, {\it Trans.\ Amer.\ Math.\ Soc.}\ {\bf 356} (2004), 185--218
%
\bibitem{NWX} Nistor, V., Weinstein, A., Xu, Ping: Pseudodifferential operators on differential groupoids, {\it Pacific J.\ Math.}\ {\bf 189} (1999) 117--152. 
%
\bibitem{P} Pedersen, G.K.: {\it C*-Algebras and their Automorphism Groups}, London Mathematical Society Monographs 14, Academic Press, Inc., London-New York, 1979
%
\bibitem{P1} Perez, J.J.: The $G$-Fredholm property of the $\bar\partial$-Neumann
Problem, {\it J.\ Geom.\ Anal.}\ (2009) {\bf 19}, 87--106
%
\bibitem{P2} Perez, J.J.: The Levi problem on strongly pseudoconvex
$G$-bundles, {\it Ann.\ Glob.\ Anal.\ Geom.}\  (2010), {\bf 37}, 1--20
%
\bibitem{P3} Perez, J.J.: A transversal Fredholm property for the
$\bar\partial$-Neumann problem on $G$-bundles, \href{http://www.ams.org/bookstore?fn=20&arg1=conmseries&ikey=CONM-535}{{\it Contemp.\ Math.}\ (2011) {\bf 535}} 187--193, \url{http://arxiv.org/abs/0912.4287}
%
\bibitem{R} Roe, J.: An index theorem on open manifolds I, {\it J.\ Diff.\ Geom.}\ {\bf 27} (1988) 87--113.
%
\bibitem{Schi} Schick, T.: $L^2$-index theorem for elliptic differential boundary problems, {\it Pacific J.\ Math.}\ {\bf 197} (2001) 423--439 
%
\bibitem{Sh} Shubin, M.A.: {\it Pseudodifferential Operators and Spectral Theory}, 2 ed.\ Springer, 2001
%
\bibitem{Srr} Shubin, M.A.: $L^2$ Riemann--Roch theorem for elliptic operators. {\it Geom.\ Funct.\ Anal.}\ {\bf 5} (1995) 482--527
%
\bibitem{S} Strichartz, R.S.: Invariant pseudo-differential operators on a Lie group, {\it Annali della Scuola Norm.\ Sup.\ di Pisa} {\bf 26}, (1972) 587--611
%
\bibitem{T} Takesaki, M.: {\it Theory of Operator Algebras} vol I, Springer-Verlag, Berlin, 1979
%
\bibitem{Ta} Taylor, M.E.: {\it Pseudodifferential Operators}, Princeton Univ.\ Press, 1981
%
\end{thebibliography}
\end{document}